\documentclass[final,3p,times]{elsarticle2}
\headheight 0.1in
\headsep 0in
\textheight 10in
\usepackage{graph}



\usepackage{amsthm,amsmath,amssymb}
\usepackage{enumitem}
\usepackage{graphicx}

\usepackage[colorlinks=true,citecolor=black,linkcolor=black,urlcolor=blue]{hyperref}


\newcommand{\com}[1]{}

\theoremstyle{plain}
\newtheorem{theorem}{Theorem}
\newtheorem{lemma}[theorem]{Lemma}
\newtheorem{corollary}[theorem]{Corollary}

\newtheorem{conjecture}[theorem]{Conjecture}

\newtheorem{observation}[theorem]{Observation}
\newtheorem{claim}[theorem]{Claim}

\theoremstyle{definition}
\newtheorem{definition}[theorem]{Definition}

\theoremstyle{remark}

\bibliographystyle{siam}
\begin{document}
\begin{frontmatter}
\title{On Induced Colourful Paths in Triangle-free Graphs}
\author{Jasine Babu\footnote{Indian Institute of Technology, Palakkad, India. E-mail: \texttt{jasine@iitpkd.ac.in}}\hspace{.5in} Manu Basavaraju\footnote{National Institute of Technology Karnataka, Surathkal, India. E-mail: \texttt{manub@nitk.ac.in}}\hspace{.5in} L.~Sunil Chandran\footnote{Indian Institute of Science, Bangalore, India. E-mail: \texttt{sunil@iisc.ac.in}}\hspace{.5in} Mathew~C. Francis\footnote{Indian Statistical Institute, Chennai, India. E-mail: \texttt{mathew@isichennai.res.in}}}

\begin{abstract}
Given a graph $G=(V,E)$ whose vertices have been properly coloured, we say that a path in $G$ is \textit{colourful} if no two vertices in the path have the same colour. It is a corollary of the Gallai-Roy-Vitaver Theorem that every properly coloured graph contains a colourful path on $\chi(G)$ vertices. We explore a conjecture that states that every properly coloured triangle-free graph $G$ contains an induced colourful path on $\chi(G)$ vertices and prove its correctness when the girth of $G$ is at least $\chi(G)$. Recent work on this conjecture by Gy\'arf\'as and S\'ark\"ozy, and Scott and Seymour has shown the existence of a function $f$ such that if $\chi(G)\geq f(k)$, then an induced colourful path on $k$ vertices is guaranteed to exist in any properly coloured triangle-free graph $G$.
\end{abstract}
\begin{keyword}
Induced Path \sep Colourful Path\sep Triangle-free Graph
\end{keyword}
\end{frontmatter}
\section{Introduction}
All graphs considered in this paper are simple, undirected and finite.
For a graph $G=(V,E)$, we denote the vertex set of $G$ by $V(G)$ and the edge set of $G$ by $E(G)$. 
A function $c:V(G)\rightarrow\{1,2,\ldots,k\}$ is said to be a \emph{proper $k$-colouring} of $G$ if for any edge $uv\in E(G)$, we have $c(u)\neq c(v)$. A graph is \textit{properly coloured}, if it has an associated proper $k$-colouring $c$ specified (for some $k$). The minimum integer $k$ for which a graph $G$ has a proper $k$-colouring is the \emph{chromatic number} of $G$, denoted by $\chi(G)$. 
A subgraph $H$ of a properly coloured graph $G$ is said to be \emph{colourful} if no two vertices of $H$ have the same colour. If a colourful subgraph $H$ of $G$ is also an induced subgraph, then we say that $H$ is an \emph{induced} colourful subgraph of $G$. The \emph{length} of a path or cycle in $G$ is the number of edges in the path or cycle. Therefore, a path on $t$ vertices has length $t-1$ and a cycle on $t$ vertices has length $t$.

The following classical result of Gallai, Roy and Vitaver (see~\cite{West})
tells us that that every (not necessarily optimally) properly coloured graph $G$ has a colourful path on $\chi(G)$ vertices (an alternative proof for this is given in Theorem~\ref{thm:colpath}).

\begin{theorem}[Gallai-Roy-Vitaver]
Let $G$ be a graph and let $H$ be any directed graph obtained by orienting the edges of $G$. Then $H$ contains a directed path on $\chi(G)$ vertices.
\end{theorem}

Indeed, given a properly coloured graph $G$, we can construct a directed graph $H$ by fixing an arbitrary order on the colours and orienting every edge from the vertex of lower colour to the vertex of higher colour. Then, by the Gallai-Roy-Vitaver Theorem, we know that there is directed path on $\chi(G)$ vertices in $H$ and this is a colourful path in $G$ as the colours on this path are strictly increasing.

We are interested in the question of when one can find colourful paths on $\chi(G)$ vertices that are also induced in a given properly coloured graph $G$. Note that the colourful path on $\chi(G)$ vertices that should exist 
in any properly coloured graph $G$, as noted above, may not always be an induced path. In fact, when $G$ is a complete graph, there is no 
induced path on more than two vertices in the graph. The following conjecture is due to N.~R.~Aravind~\cite{Aravind}.

\begin{conjecture}\label{conj:main}
Let $G$ be a triangle-free graph that is properly coloured. Then there is an induced colourful path on $\chi(G)$ vertices in $G$. 
\end{conjecture}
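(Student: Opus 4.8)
The plan is to prove the statement by induction on $\chi(G)$, using the colour-increasing paths underlying the Gallai--Roy corollary (Theorem~\ref{thm:colpath}) as a skeleton and then exploiting triangle-freeness to remove chords. Given the proper colouring $c$, I would assign to each vertex $v$ the value $f(v)$ equal to the maximum number of vertices on a path ending at $v$ along which $c$ strictly increases. As in the proof of Theorem~\ref{thm:colpath}, $f$ is itself a proper colouring and its maximum value is at least $\chi(G)$, so some vertex $z$ admits a colour-increasing (hence automatically colourful) path $P=v_1v_2\cdots v_t$ ending at $z$ with $t\ge\chi(G)$; trimming to a prefix gives one with exactly $\chi(G)$ vertices. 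The entire difficulty is that $P$ need not be induced.

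The first leverage point is that in a triangle-free graph no two vertices at distance two along any path can be adjacent, since $v_iv_{i+1}v_{i+2}$ would otherwise form a triangle; thus the only possible chords of $P$ join vertices $v_i,v_j$ with $j\ge i+3$. I would then attempt a greedy end-extension: maintain an induced colourful path, and as long as its length is below $\chi(G)$, extend it at the high-colour endpoint by a neighbour that (i) carries a colour not yet used on the path and (ii) is non-adjacent to all earlier path vertices. To show such a vertex always exists I would argue by contradiction: if every candidate neighbour either repeats a colour or creates a chord, one should be able to recolour the endpoint's neighbourhood and propagate the change to obtain a proper colouring of $G$ using fewer than $\chi(G)$ colours, contradicting the definition of $\chi(G)$. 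Making this counting/recolouring step precise, ideally through a potential function measuring how many colours remain reachable from the current endpoint, is the heart of the argument.

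The step I expect to be genuinely hard, and the reason the statement is posed as a conjecture rather than a theorem, is the ruling out of \emph{long} chords: triangle-freeness eliminates distance-two chords but says nothing about a chord $v_iv_j$ with $j-i$ large, and such a chord can block every attempted extension without yielding any local contradiction. A natural remedy, and the route taken for the special case in this paper, is to abandon the given colouring in favour of a carefully ordered first-fit (greedy) colouring, so that the colour of a vertex certifies a descending chain of neighbours realising every smaller colour; one can then hope to extract an induced colourful subpath from this chain by rerouting through it whenever a chord appears. Controlling these reroutings so that length is never lost is the core combinatorial obstacle, and it is exactly here that a girth hypothesis becomes valuable: forbidding short cycles limits how chords and rerouting can interact, which is why I would first establish the conjecture under the assumption that the girth equals $\chi(G)$ and only then try to remove it. I should also note one subtlety pervading the whole argument: the given colouring may use more than $\chi(G)$ colours, so the target path has fewer vertices than colours present, and the extension argument must track the bound $\chi(G)$ rather than the total number of colours used.
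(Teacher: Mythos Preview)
The statement you are trying to prove is a \emph{conjecture} in the paper, not a theorem: the paper does not prove it in general, and explicitly leaves the range $3<g(G)<\chi(G)$ open. So there is no ``paper's own proof'' to compare against. Your write-up is candid about this---you yourself flag the long-chord step as ``genuinely hard'' and say this is ``the reason the statement is posed as a conjecture''---so what you have submitted is a plan of attack rather than a proof.

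Within that plan, the greedy end-extension with a recolouring fallback is the real gap. The claim ``if every candidate neighbour either repeats a colour or creates a chord, one can recolour to save a colour'' is not justified: the neighbours blocked by chords can carry entirely new colours, and there is no mechanism offered by which triangle-freeness converts this local obstruction into a global $(\chi(G)-1)$-colouring. Without that, the induction never gets off the ground. Your fallback instinct---pass to a first-fit colouring so that each label certifies a decreasing chain of neighbours, and first handle the case $g(G)=\chi(G)$---is exactly the route the paper takes for its main result (Theorem~\ref{thm:main}). But even there the paper's argument is far more structural than ``reroute when a chord appears'': assuming no induced colourful path exists, it uses the refined greedy labelling to build a system of forced vertices and colourful $k$-cycles, proves several rigidity lemmas about how decreasing paths can intersect (Lemma~\ref{lem:decreasingpaths}, Corollary~\ref{cor:forcedtree}), pins down the neighbourhoods of all remaining vertices (Lemmas~\ref{lem:optionalneighbours}--\ref{lem:optionalindependent}), and finally derives that $G$ is bipartite (when $k=4$) or has $\chi(G)\le 3$ (when $k>4$), contradicting $\chi(G)=k$. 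Your sketch does not yet contain the key idea that the contradiction comes from forcing $\chi(G)$ down, rather than from directly exhibiting the path.
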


Recently, Gy\'arf\'as and S\'ark\"ozy~\cite{GyarfasSarkozy} studied this conjecture and showed that there exists a function $f(k)$ such that in any properly coloured graph $G$ with girth at least 5 and $\chi(G)\geq f(k)$, there is an induced colourful path on $k$ vertices. This was improved by Scott and Seymour~\cite{ScottSeymour}, who removed the girth condition, and showed that for any two integers $k$ and $t$, there exists a function $f(k,t)$ such that in any properly coloured graph $G$ with $\omega(G)\leq t$ and $\chi(G)\geq f(k,t)$, there is an induced colourful path on $k$ vertices (here, $\omega(G)$ denotes the maximum size of a clique in $G$). 

A necessary condition for Conjecture~\ref{conj:main} to hold is the presence of an induced path on $\chi(G)$ vertices in any triangle-free graph $G$. 
Indeed something stronger is known to be true: each vertex in a triangle-free graph $G$ is the starting point of an induced path on $\chi(G)$ vertices~\cite{Gyarfas}. Concerning induced trees, Gy\'{a}rf\'{a}s~\cite{Gyarfas1} and Sumner~\cite{Sumner} conjectured that there exists an integer-valued function $f$ defined on finite trees with the property that 
every triangle-free graph $G$ with $\chi(G)=f(T)$ contains $T$ as an induced subgraph. This was proven true for trees of radius two by 
Gy\'{a}rf\'{a}s, Szemer\'{e}di, and Tuza~\cite{GyarSzTu}. 
There have been several investigations on variants of the Gallai-Roy-Vitaver Theorem~\cite{AlishahiTT11, Li}. 
Every connected graph $G$ other than $C_7$ admits a proper $\chi(G)$-colouring such that every vertex of $G$ is the beginning of 
a (not necessarily induced) colourful path on $\chi(G)-1$ vertices~\cite{AlishahiTT11}.
A stronger version of the Gallai-Roy-Vitaver Theorem that guarantees an induced directed path on $\chi(G)$ vertices in any directed graph $G$ would have easily implied Conjecture~\ref{conj:main}. Clearly, such a theorem cannot be true for every directed graph. But Kierstead and Trotter~\cite{KiersteadT92} show that no such result can be obtained even if the underlying undirected graph of $G$ is triangle-free. They show that for every natural number $k$, there exists a digraph $G$ such that its underlying undirected graph is triangle-free and has chromatic number $k$, but $G$ has no induced directed path on 4 vertices.

Conjecture~\ref{conj:main} is readily seen to be true for any triangle-free graph $G$ with $\chi(G)=3$, because the colourful path guaranteed to exist in $G$ by the Gallai-Roy-Vitaver Theorem is also an induced path in $G$. For the same reason, the conjecture is true for any graph $G$ with $g(G)>\chi(G)$, where $g(G)$ is the \emph{girth} of $G$, or the length of the shortest cycle in $G$.
In this paper, we first prove Conjecture~\ref{conj:main} for the case when $\chi(G)=4$. Note that it follows from the above observation that to prove this, we only need to consider graphs $G$ with $g(G)=4$. Proving Conjecture~\ref{conj:main} in its full generality even for the case when $\chi(G)=5$ does not seem to be easy. As explained above, in order to prove the conjecture for the case $\chi(G)=5$, we only need to prove it for graphs with $g(G)\in\{4,5\}$. Our approach shows that the conjecture is true when $g(G)=5$; the case when $g(G)=4$ is open. Scott and Seymour~\cite{ScottSeymour} mention that they verified by hand that the conjecture holds for all possible colourings of the Mycielski graph on 23 vertices having chromatic number 5. One natural way to weaken the conjecture would be to restrict the girth of the graph to be above some constant fraction of $\chi(G)$. Even this appears to be difficult. The main result of this paper shows that for each value of $\chi(G)\geq 4$, the conjecture is true for graphs with $g(G)=\chi(G)$.
\section{Preliminaries}
Notation used in this paper is the standard notation used in graph theory (see e.g.~\cite{West}). We shall now describe a special greedy colouring procedure for an already coloured graph that will later help us in proving our main result.
\paragraph{\textbf{The refined greedy algorithm}}Given a properly coloured graph $G$ with the colouring $\beta$, we will construct a new proper colouring $\alpha:V(G)\rightarrow \mathbb{N}^{>0}$ of $G$, using the algorithm given below. 
Let $b_1<b_2<\cdots<b_t$ be the colours used by $\beta$.  
\begin{tabbing}
~~~~~\=~~~~~\=~~\kill
For every vertex $v\in V(G)$, set $\alpha(v)\leftarrow 0$\\
\textbf{for} $i$ from 1 to $t$ \textbf{do}\\
\>\textbf{for} vertex $v$ with $\beta(v)=b_i$ and $\alpha(v)=0$ \textbf{do}\\
\>\>Colour $v$ with the least positive integer that has \\ 
\>\>not already been assigned to a neighbour of it\\
\>\>i.e., set $\alpha(v)\leftarrow\min (\mathbb{N}^{>0} \setminus \{\alpha(u)\colon u\in N(v)\})$.
\end{tabbing}

Let $G$ be a graph with a proper colouring $\beta$ and let $\alpha$ be the proper colouring that is constructed by the refined greedy algorithm. We now define a \emph{decreasing path} in $G$ as follows:  

\begin{definition}[Decreasing path]
A path $u_1u_2\ldots u_l$ in $G$ is said to be a ``decreasing path'' if for $2\leq i\leq l$, $\alpha(u_i)<\alpha(u_{i-1})$ and $\beta(u_i)<\beta(u_{i-1})$.
\end{definition}

\begin{lemma}\label{lem:path}
Let $v\in V(G)$ and $X=\{a_1,a_2,\ldots,a_{|X|}\}\subseteq\{1,2,\ldots,\alpha(v)-1\}$ such that $a_1<a_2<\cdots<a_{|X|}$. Then there is a decreasing path $vu_{|X|}u_{|X|-1} \ldots u_1$ in $G$ such that for $1\leq i\leq |X|$, $\alpha(u_i)=a_i$.
\end{lemma}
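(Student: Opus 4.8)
The plan is to prove Lemma~\ref{lem:path} by induction on $|X|$, driven by a single structural property of the colouring $\alpha$ produced by the refined greedy algorithm, which I will call the \emph{completeness property}: for every vertex $v$ and every integer $c$ with $1\le c\le\alpha(v)-1$, there is a neighbour $u\in N(v)$ with $\alpha(u)=c$ and $\beta(u)<\beta(v)$. First I would establish this property. Since $\beta$ is a proper colouring, every neighbour of $v$ has a $\beta$-colour different from $\beta(v)$; and because the algorithm processes the $\beta$-colour classes in increasing order, at the moment $v$ receives its $\alpha$-value every neighbour $u$ with $\beta(u)<\beta(v)$ has already been assigned its final positive $\alpha$-value, while every neighbour $u$ with $\beta(u)>\beta(v)$ still has $\alpha(u)=0$. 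Hence the set of positive $\alpha$-values occurring in $N(v)$ at that instant is exactly $\{\alpha(u)\colon u\in N(v),\ \beta(u)<\beta(v)\}$, and since $\alpha(v)$ is chosen to be the least element of $\mathbb{N}^{>0}$ missing from this set, every $c<\alpha(v)$ must occur in it. These values never change afterwards, so the property holds for the final colouring.

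With this in hand the induction is short. For the base case $|X|=0$ the required path is the single vertex $v$, which is vacuously decreasing. For the inductive step, let $m=\max X$; since $m\le\alpha(v)-1$, the completeness property supplies a neighbour $w$ of $v$ with $\alpha(w)=m$ and $\beta(w)<\beta(v)$. Setting $X'=X\setminus\{m\}$, I observe that $X'\subseteq\{1,\ldots,m-1\}=\{1,\ldots,\alpha(w)-1\}$ and $|X'|=|X|-1$, so the induction hypothesis applied to $w$ and $X'$ produces a decreasing path $w\,u_{|X|-1}\ldots u_1$ with $\alpha(u_i)\in X'$ for each $i$. Prepending $v$ and renaming $w$ as $u_{|X|}$ yields the candidate path $v\,u_{|X|}u_{|X|-1}\ldots u_1$. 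Because $\alpha(w)=m<\alpha(v)$ and $\beta(w)<\beta(v)$, the first step is decreasing, and the remaining steps are decreasing by the induction hypothesis, so the whole path is decreasing; moreover $\alpha(u_{|X|})=m\in X$ and $\alpha(u_i)\in X'\subseteq X$ for $i<|X|$, as required.

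The only points needing care, and the place where a careless argument could slip, are that the construction yields a genuine (simple) path and that the $\beta$-inequality points the right way. Both are handled automatically by the decreasing condition and the completeness property: along any decreasing path the $\alpha$-values strictly decrease and are therefore pairwise distinct, which forces all vertices to be distinct. In particular $v$ cannot equal any vertex of the inner path, all of whose $\alpha$-values are at most $m-1<\alpha(v)$, and $w$ cannot equal any $u_i$ with $i<|X|$ since $\alpha(w)=m\notin X'$. The requirement $\beta(w)<\beta(v)$, which is what makes the prepended edge satisfy the $\beta$-part of the decreasing condition, is precisely the extra information the completeness property provides beyond the mere existence of a neighbour coloured $m$; extracting from the algorithm not just that each smaller colour appears in $N(v)$, but that it appears on a vertex of strictly smaller $\beta$-colour, is therefore essential. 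Accordingly, I expect the bulk of the work to lie in pinning down the completeness property from the algorithm's processing order rather than in the induction itself.
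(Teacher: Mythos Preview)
The paper does not actually include a proof of this lemma; it states the lemma and immediately moves on to use it. Your argument is correct and is exactly the natural proof: the key ``completeness property'' follows from the fact that the refined greedy algorithm processes the $\beta$-colour classes in increasing order, so when $v$ is assigned $\alpha(v)$ only neighbours with smaller $\beta$-colour carry positive labels, and hence every label below $\alpha(v)$ is realised on such a neighbour; the induction on $|X|$ via the maximum element then builds the decreasing path, with simplicity of the path guaranteed by the strictly decreasing $\alpha$-values.
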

\begin{proof}
We shall prove this by induction on $\alpha(v)$. It is easy to see that the statement is true for the base case when $\alpha(v)=1$ (because $X=\emptyset$ in that case). Suppose that the statement is true for vertices $u$ with $\alpha(u)<\alpha(v)$. Note that the refined greedy algorithm colours each vertex exactly once. The fact that the algorithm assigned the colour $\alpha(v)$ to $v$ implies that at the time of colouring $v$, we had $\alpha(v)=\min(\mathbb{N}^{>0}\setminus \{\alpha(u)\colon u\in N(v)\})$. Since $a_{|X|}<\alpha(v)$, this means that at that point of time, there was $w\in N(v)$ having $\alpha(w)=a_{|X|}$. Also, since $w$ had previously been coloured by the algorithm, we have $\beta(w)\leq\beta(v)$. But as $w\in N(v)$, we know that $\beta(w)\neq\beta(v)$, giving us $\beta(w)<\beta(v)$. Now, applying the induction hypothesis on $w$ and the set $X\setminus\{a_{|X|}\}$, we get that there is a decreasing path $wu'_{|X|-1}u'_{|X|-2}\ldots u'_1$ in $G$ such that for $1\leq i\leq |X|-1$, $\alpha(u'_i)=a_i$. It is clear that $vwu'_{|X|-1}u'_{|X|-2}\ldots u'_1$ is then a decreasing path of the form $vu_{|X|}u_{|X|-1}\ldots u_1$, where for $1\leq i\leq |X|$, $\alpha(u_i)=a_i$.
\end{proof}

The above observation about the refined greedy algorithm can be used to show that there is a colourful path on $\chi(G)$ vertices in every properly coloured graph $G$ (without using the Gallai-Roy-Vitaver Theorem).

\begin{theorem}\label{thm:colpath}
If $G$ is any graph whose vertices are properly coloured, then there is a colourful path on $\chi(G)$ vertices in $G$.
\end{theorem}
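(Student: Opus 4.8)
The plan is to run the refined greedy algorithm on the given properly coloured graph $G$ and then extract the desired path directly from Lemma~\ref{lem:path}. Write $\beta$ for the proper colouring that $G$ comes equipped with, and let $\alpha$ be the colouring produced by the refined greedy algorithm from $\beta$. The observation I would build the whole argument around is that a decreasing path is, \emph{by definition}, strictly decreasing in both $\alpha$ and $\beta$ simultaneously. In particular, the $\beta$-values along any decreasing path are pairwise distinct, so every decreasing path is automatically colourful with respect to the original colouring $\beta$ — which is exactly the notion of ``colourful'' in the statement of the theorem.

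With that in place, I would first locate a vertex from which a sufficiently long decreasing path can be grown. Since $\alpha$ is itself a proper colouring of $G$, it must use at least $\chi(G)$ distinct positive-integer colours, and hence its largest colour value is at least $\chi(G)$. Fix a vertex $v$ with $\alpha(v)\ge\chi(G)$. I would then apply Lemma~\ref{lem:path} to this vertex $v$ with the set $X=\{1,2,\ldots,\chi(G)-1\}$. Because $\alpha(v)\ge\chi(G)$, we have $X\subseteq\{1,\ldots,\alpha(v)-1\}$, so the hypothesis of the lemma is satisfied, and it produces a decreasing path $v\,u_{\chi(G)-1}\,u_{\chi(G)-2}\cdots u_1$ on exactly $\chi(G)$ vertices.

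It then remains only to invoke the observation from the first paragraph: since this path is decreasing, $\beta$ strictly decreases along it, so its $\chi(G)$ vertices receive $\chi(G)$ pairwise distinct $\beta$-colours. Thus the path is colourful with respect to $\beta$, which gives a colourful path on $\chi(G)$ vertices and completes the proof. There is no genuine obstacle once Lemma~\ref{lem:path} is available; the single point that needs care — and the point my earlier attempt mishandled — is to use the $\beta$-decreasing half of the definition of a decreasing path rather than the $\alpha$-decreasing half, so that colourfulness is established with respect to the \emph{given} colouring $\beta$ and not merely with respect to the auxiliary greedy colouring $\alpha$.
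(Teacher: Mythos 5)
Your proposal is correct and follows essentially the same route as the paper's own proof: run the refined greedy algorithm, pick a vertex $v$ of large label, apply Lemma~\ref{lem:path} with $X=\{1,2,\ldots,\chi(G)-1\}$, and note that a decreasing path is colourful because $\beta$ strictly decreases along it. The only cosmetic difference is that you take $\alpha(v)\ge\chi(G)$ while the paper takes $\alpha(v)=\chi(G)$ exactly; both choices satisfy the hypothesis of the lemma, and yours even sidesteps the (implicit) fact that greedy labels form a consecutive initial segment of $\mathbb{N}^{>0}$.
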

\begin{proof}
Let $\beta$ denote the proper colouring of $G$.
Run the refined greedy algorithm on $G$ to generate the colouring $\alpha$. Clearly, the algorithm will use at least $\chi(G)$ colours as the colouring $\alpha$ generated by the algorithm is also a proper colouring of $G$. Let $v$ be any vertex in $G$ with $\alpha(v)\geq\chi(G)$. Now consider the set $X=\{1,2,\ldots,\chi(G)-1\}$. By applying Lemma~\ref{lem:path} on $v$ and $X$, we can conclude that there is a path on $\chi(G)$ vertices starting at $v$ on which the colours in the colouring $\beta$ are strictly decreasing. This path is a colourful path on $\chi(G)$ vertices in $G$.
\end{proof}

\begin{corollary}\label{cor:colpath}
Any properly coloured graph $G$ with $g(G)>\chi(G)$ has an induced colourful path on $\chi(G)$ vertices.
\end{corollary}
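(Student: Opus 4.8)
The plan is to obtain the corollary almost immediately from Theorem~\ref{thm:colpath} by exploiting the girth hypothesis. First I would invoke Theorem~\ref{thm:colpath} to produce a colourful path $P=p_1p_2\cdots p_{\chi(G)}$ on exactly $\chi(G)$ vertices in the properly coloured graph $G$. Since a colourful path has all its $\beta$-colours (or indeed any proper colouring's colours) distinct, $P$ already satisfies everything required except possibly being \emph{induced}; so the only thing left to verify is that $P$ has no chord.

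Suppose, for the sake of contradiction, that $P$ has a chord $p_ip_j$ with $i<j$ and $j\geq i+2$. Then the subpath $p_ip_{i+1}\cdots p_j$ together with the edge $p_jp_i$ forms a cycle $C$ in $G$. I would then bound the length of $C$: it has $j-i+1$ vertices, and since $1\leq i<j\leq\chi(G)$, its length is at most $\chi(G)$. But the hypothesis $g(G)>\chi(G)$ means that $G$ contains no cycle of length at most $\chi(G)$, a contradiction. Hence $P$ admits no chord and is therefore an induced colourful path on $\chi(G)$ vertices, as desired.

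The argument is short precisely because the path supplied by Theorem~\ref{thm:colpath} has exactly $\chi(G)$ vertices, so any chord is forced to close up a cycle too short to exist under the girth assumption. There is no genuine obstacle here; the only point requiring care is the length bound on the cycle created by a chord, where one checks that even the longest such cycle — arising from the extreme chord $p_1p_{\chi(G)}$ — has length at most $\chi(G)$. This disposes of the easy boundary regime $g(G)>\chi(G)$ and thereby isolates the genuinely difficult case $g(G)=\chi(G)$, which is the main content of the paper: there a short cycle may coexist with the colourful path, so this elementary girth argument breaks down and a more delicate analysis of the refined greedy colouring is needed.
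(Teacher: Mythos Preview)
Your argument is correct and is exactly the paper's approach: invoke Theorem~\ref{thm:colpath} to get a colourful path on $\chi(G)$ vertices and observe that any chord would close a cycle of length at most $\chi(G)$, contradicting $g(G)>\chi(G)$. The paper states this in a single sentence; you have simply spelled out the chord-to-short-cycle step in detail.
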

\begin{proof}
If $g(G)>\chi(G)$, then the colourful path given by Theorem~\ref{thm:colpath} is an induced path in $G$.
\end{proof}

This implies that the conjecture is true for all triangle-free graphs with chromatic number at most 3.
It also implies that in order to prove Conjecture~\ref{conj:main}, one only has to consider graphs $G$ with $4\leq g(G)\leq\chi(G)$.
The main result of this paper is that in any properly coloured graph $G$ with $4\leq g(G)=\chi(G)$, there exists an induced colourful path on $\chi(G)$ vertices.

\section{Induced colourful paths in graphs with girth equal to chromatic number}
In this section, we shall prove our main result, given by the theorem below.
\begin{theorem}\label{thm:main}
Let $G$ be a graph with $g(G)\geq\chi(G)=k$, where $k\geq 4$, and whose vertices have been properly coloured. Then there exists an induced colourful path on $k$ vertices in $G$.
\end{theorem}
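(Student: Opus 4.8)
The plan is to feed the given proper colouring $\beta$ into the refined greedy algorithm to obtain the colouring $\alpha$, and to exploit the defining property of that algorithm: if $\alpha(v)=c$ then for every $j<c$ the vertex $v$ has a neighbour $u$ with $\alpha(u)=j$ \emph{and} $\beta(u)<\beta(v)$. Indeed, each $\beta$-class is an independent set, so when $v$ is processed its only already-coloured neighbours are those of strictly smaller $\beta$-value, and $\alpha(v)$ is the least colour they miss. Since $\alpha$ is proper it uses at least $k$ colours, so some vertex has $\alpha$-value $\geq k$, and Theorem~\ref{thm:colpath} (via Lemma~\ref{lem:path} with $X=\{1,\dots,k-1\}$) yields a colourful decreasing path $w_1w_2\cdots w_k$ with $\alpha(w_i)=k+1-i$ and $\beta(w_1)>\cdots>\beta(w_k)$.

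First I would let the girth do the bulk of the work. In a path on $k$ vertices a chord joining two vertices at distance $d\leq k-2$ along the path closes a cycle of length $d+1\leq k-1<g(G)$, which is impossible; hence the \emph{only} chord such a path can have joins its two endpoints. Therefore $w_1\cdots w_k$ is already an induced colourful path on $k$ vertices unless $w_1w_k\in E(G)$, in which case $w_1\cdots w_kw_1$ is an induced colourful $C_k$. This reduces Theorem~\ref{thm:main} to the statement that an induced colourful $C_k$ forces an induced colourful path on $k$ vertices.

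The main step is then an extension argument. Given the induced colourful cycle $w_1\cdots w_kw_1$, I would delete the bottom vertex $w_k$ and try to re-grow the induced path $w_1w_2\cdots w_{k-1}$ at the top end $w_1$. If $w_1$ has a neighbour $z\notin\{w_1,\dots,w_k\}$ whose colour avoids $\{\beta(w_1),\dots,\beta(w_{k-1})\}$, then $zw_1w_2\cdots w_{k-1}$ is an induced colourful $P_k$: it is colourful by the choice of $\beta(z)$, and a chord $zw_j$ with $2\leq j\leq k-1$ would create the two cycles $zw_1w_2\cdots w_jz$ and $zw_1w_kw_{k-1}\cdots w_jz$, of lengths $j+1$ and $k-j+3$; for $k\geq5$ at least one of these is shorter than $k$, contradicting the girth, so no such chord exists. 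A symmetric extension at the bottom end uses a neighbour of $w_k$ off the cycle whose colour avoids $\{\beta(w_2),\dots,\beta(w_k)\}$. Thus it suffices to produce one such off-cycle neighbour at one of the two ends.

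The hard part is exactly guaranteeing that extending vertex. The obstruction is the configuration in which every off-cycle neighbour of $w_1$ (and, symmetrically, of $w_k$) is coloured from the interior palette $\{\beta(w_2),\dots,\beta(w_{k-1})\}$; note that $\alpha(w_1)=k$ already forces $w_1$ to have off-cycle neighbours of each $\alpha$-value in $\{2,\dots,k-2\}$, since the cycle only realises $\alpha$-values $k-1$ and $1$ at $w_2$ and $w_k$, so this palette restriction is a genuine structural constraint rather than an impossibility on its face. To break it I would choose the induced colourful $C_k$ extremally — for instance, among all such cycles arising from an $\alpha$-maximum vertex, one whose bottom colour $\beta(w_k)$ is largest — and then use the greedy property at $w_1$ to locate an alternative neighbour of $\alpha$-value $1$ lying off the cycle, or to build a competing colourful cycle contradicting the extremal choice. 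The boundary case $k=4$ must be handled separately, since there $k-j+3=4=k$ permits the chord $zw_3$ and the girth no longer rules out the bad chord; this is presumably why the paper settles $\chi(G)=4$ first and only then extends to $g(G)=\chi(G)$. I expect this extremal/greedy case analysis, rather than the girth reduction, to be the substance of the proof.
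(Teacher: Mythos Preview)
Your reduction is sound and matches the paper's opening: under the contradiction hypothesis ``no induced colourful $P_k$'', every colourful $P_k$ closes up into an induced colourful $C_k$ (this is Observation~\ref{obs:colpath}), and your chord-counting argument for the extension step is correct for $k\geq 5$. The gap is in what you call the ``hard part'', and it is not merely unfinished --- the specific line of attack you propose cannot succeed. The paper's Lemma~\ref{lem:colourfulcycle} shows that, once one assumes no induced colourful $P_k$ exists, \emph{every} off-cycle neighbour $z$ of a cycle vertex $y_i$ has $\beta(z)$ lying strictly in the interior palette $\{\beta(y_j):j\neq i-1,i,i+1\}$; in particular $z$ can never take the colour $\beta(w_k)$ you need at $w_1$. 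So the obstruction you identify is not a bad case to be broken by an extremal choice of cycle: it holds for all colourful cycles simultaneously. Likewise, your fallback of finding ``an alternative neighbour of $\alpha$-value $1$ lying off the cycle'' is blocked by Lemma~\ref{lem:uis}, which proves that $v=w_1$ has \emph{exactly one} neighbour of each label $i$, and that neighbour already has colour $b_i$.

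The paper's proof therefore abandons the local extension idea entirely and instead extracts global structure from the contradiction hypothesis. It defines \emph{forced} vertices (those reachable from $v$ by a decreasing path) and shows that a forced vertex with label $i$ must have colour $b_i$ (Lemma~\ref{lem:primarycolours}), that two decreasing paths from $v$ can only rejoin at a vertex of colour $b_1$ (Lemma~\ref{lem:decreasingpaths}), so the forced vertices not coloured $b_1$ induce a tree (Corollary~\ref{cor:forcedtree}). For $k>4$ it then pins down the \emph{optional} vertices completely: each has degree exactly two, with neighbours of colours $b_1$ and $b_2$ (Lemma~\ref{lem:optionalneighbours}), and they form an independent set (Lemma~\ref{lem:optionalindependent}). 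Gluing the optional vertices onto the tree at their $b_2$-neighbours shows that $G$ minus the colour class $b_1$ is a forest, whence $\chi(G)\leq 3$, the desired contradiction. The case $k=4$ is handled separately by showing $G$ is bipartite. None of this is an extremal-cycle argument; the substance is the rigidity of the decreasing-path structure emanating from $v$.
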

We can assume that $G$ is ``critical'', i.e., every proper induced subgraph of $G$ has chromatic number less than $k=\chi(G)$. This is because if the theorem is proven for critical graphs, then since $G$ contains a critical induced subgraph $G'$ having $\chi(G')=k$, we can apply the theorem to $G'$, whose vertices are coloured with same colours that they had in $G$, to get an induced colourful path in $G'$ containing $k$ vertices (note that $g(G')\geq g(G)$). This path is clearly also an induced colourful path in $G$. Assuming that $G$ is critical, the following observation is not too hard to see~\cite{West}.

\begin{observation}\label{obs:mindegree}
Every vertex in $G$ has degree at least $k-1$.
\end{observation}

Note that critical graphs are connected, and hence we assume that $G$ is connected.
Note also that by Corollary~\ref{cor:colpath}, we can assume $g(G)=k$.
Let $\beta:V(G)\rightarrow\mathbb{N}^{>0}$ denote the proper colouring of $G$ that is given.

A $k$-cycle in $G$ in which no colour repeats is said to be a \emph{colourful $k$-cycle}, sometimes shortened to just ``colourful cycle''. Notice that every colourful cycle in $G$ is also an induced cycle as $g(G)=k$. Also, from here onwards, we shorten ``colourful path on $k$ vertices'' to just ``colourful path''.

Suppose that there is no induced colourful path on $k$ vertices in $G$.

\begin{observation}\label{obs:colpath}
Since $g(G)=k$, if $y_1y_2\ldots y_k$ is a colourful path on $k$ vertices in $G$, then the edge $y_1y_k\in E(G)$. Thus, $y_1y_2\ldots y_ky_1$ is a colourful $k$-cycle in $G$.
\end{observation}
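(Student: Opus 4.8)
The plan is to combine the standing assumption that $G$ contains no induced colourful path on $k$ vertices with the girth hypothesis $g(G)=k$. The first step is to observe that since $y_1y_2\ldots y_k$ is a colourful path on $k$ vertices, the standing assumption forces it to be \emph{non-induced}; hence it must have at least one chord, i.e.\ an edge $y_iy_j\in E(G)$ with $1\le i<j\le k$ and $j\ge i+2$.

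The second step is to argue that the only chord that can occur is $y_1y_k$. Consider any chord $y_iy_j$ as above. Together with the subpath $y_iy_{i+1}\ldots y_j$, the edge $y_iy_j$ closes a cycle of length $(j-i)+1$. Since every cycle in $G$ has length at least $g(G)=k$, we get $(j-i)+1\ge k$, i.e.\ $j-i\ge k-1$. On the other hand $j-i\le k-1$ because $1\le i<j\le k$. Hence $j-i=k-1$, which forces $i=1$ and $j=k$.

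Combining the two steps, a chord exists and the only possible chord is $y_1y_k$, so $y_1y_k\in E(G)$. Appending this edge to the path then yields the closed walk $y_1y_2\ldots y_ky_1$ on the $k$ distinct vertices $y_1,\ldots,y_k$, which is therefore a $k$-cycle; since the path was colourful, no colour repeats among its vertices, so this cycle is a colourful $k$-cycle, as claimed.

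I do not expect any real obstacle here: the entire content is the length count on the cycle created by a chord, pinned down exactly by the girth. The one point I would state carefully is that it is precisely the standing no-induced-colourful-path assumption (and not anything about the colourings $\beta$ or $\alpha$) that guarantees the existence of a chord in the first place; without that assumption the statement would be vacuous whenever $y_1y_2\ldots y_k$ happens to already be induced.
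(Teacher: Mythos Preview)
Your argument is correct and is exactly the reasoning the paper intends: the standing assumption that no colourful path on $k$ vertices is induced forces a chord, and the girth bound $g(G)=k$ pins the chord to $y_1y_k$. The paper states this as an observation without a separate proof, so your write-up simply makes the implicit justification explicit.
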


Let $\alpha$ be a proper colouring of $G$ generated by running the refined greedy algorithm on $G$. We shall refer to the colours of the colouring $\alpha$ as ``labels''. From here onwards, we shall reserve the word ``colour'' to refer to a colour in the colouring $\beta$. As before, whenever we say that a path or a cycle is ``colourful'', we are actually saying that it is colourful in the colouring $\beta$.

We say that a path with no repeating colours is an ``almost decreasing path'' if the subpath induced by the vertices other than the starting vertex is a decreasing path. Note that any decreasing path is also an almost decreasing path.

\begin{definition}
We say that a set of vertices or a subgraph in $G$ ``sees'' the colour $i$ if one of the vertices in it has colour~$i$.
\end{definition}

If $G_1$ and $G_2$ are two subgraphs of $G$, then we define $G_1\cup G_2$ to be the subgraph of $G$ with vertex set $V(G_1)\cup V(G_2)$ and edge set $E(G_1)\cup E(G_2)$. In particular, if $G_1$ is a subgraph of $G$ and $xy\in E(G)$, we denote by $G_1\cup xy$ the subgraph with vertex set $V(G_1)\cup\{x,y\}$ and edge set $E(G_1)\cup\{xy\}$.

The proof of Theorem~\ref{thm:main} is split into two cases: when $k=4$ and when $k>4$.
\vspace{-0.05in}

\subsection{Case when $k=4$}

In this case, we have $\chi(G)=g(G)=4$.

As $\alpha$ is also a proper colouring of $G$, we know that there exists a vertex $v$ in $G$ with label 4 (i.e., $\alpha(v)=4$).
By Lemma~\ref{lem:path}, there exists a decreasing path $v_4v_3v_2v_1$ where $v_4=v$ and for $1\leq i\leq 3$, we have $\beta(v_i)<\beta(v_{i+1})$ and $\alpha(v_i)=i$. Clearly, as $v_4v_3v_2v_1$ is a decreasing and hence colourful path, by Observation~\ref{obs:colpath}, we have $v_1v_4\in E(G)$. Again by Lemma~\ref{lem:path}, we have a path $vv'_2v'_1$ in which we have $\beta(v'_1)<\beta(v'_2)<\beta(v)$, $\alpha(v'_2)=2$ and $\alpha(v'_1)=1$. Note that $v'_2\neq v_2$ and $v'_1\neq v_1$ (as otherwise $vv'_2v_1v$ would be a triangle in $G$). This means that the vertices in $\{v_4,v_3,v_2,v_1,v'_2,v'_1\}$ are all pairwise distinct. Let $\beta(v_i)=b_i$ for each $i$, where $1 \le i \le 4$. We shall call the colours $b_1,b_2,b_3,b_4$ ``primary colours''. Clearly, we have $b_1<b_2<b_3<b_4$.

\begin{claim}
$\beta(v'_2)=b_2$ and $\beta(v'_1)=b_1$.
\end{claim}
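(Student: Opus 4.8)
The plan is to play two facts against each other. First, $\alpha$ is a proper colouring, so any two vertices that receive the same label are non-adjacent; in particular $v_2\not\sim v_2'$ (both have label $2$) and $v_1\not\sim v_1'$ (both have label $1$). This is a cleaner and stronger statement than the triangle argument used to get $v_2'\neq v_2$ and $v_1'\neq v_1$. Second, I would use the contrapositive of Observation~\ref{obs:colpath}: a colourful path on $4$ vertices forces its endpoints to be adjacent, so \emph{a $4$-vertex path whose endpoints are non-adjacent cannot be colourful}, i.e.\ it must repeat a colour. The whole claim then reduces to pointing these two observations at a few well-chosen $4$-paths through $v_4$.

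To pin down $\beta(v_2')$, I would consider the two $4$-paths that start at $v_2'$, pass through $v_4$ and one of its two cycle-neighbours $v_1,v_3$, and end at $v_2$: namely $v_2'v_4v_1v_2$ and $v_2'v_4v_3v_2$. Both are genuine paths (the six vertices $v_4,v_3,v_2,v_1,v_2',v_1'$ are pairwise distinct, and all required edges $v_2'v_4$, $v_4v_1$, $v_4v_3$, $v_1v_2$, $v_3v_2$ are present, using the established $v_1v_4\in E(G)$ and the cycle edges). Both have endpoints $v_2'$ and $v_2$, which are non-adjacent, so neither path is colourful and each must repeat a colour. Since $\beta(v_2')<b_4$, the repeated colour in each case must be one of the remaining primary colours on the path: the first path gives $\beta(v_2')\in\{b_1,b_2\}$ and the second gives $\beta(v_2')\in\{b_2,b_3\}$. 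Intersecting these forces $\beta(v_2')=b_2$.

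With $\beta(v_2')=b_2$ in hand (so that $\beta(v_1')<\beta(v_2')=b_2<b_4$ along the decreasing path $v_4v_2'v_1'$), I would finish by examining the $4$-path $v_1'v_2'v_4v_1$, whose endpoints $v_1'$ and $v_1$ are non-adjacent. It too must repeat a colour, and as $\beta(v_1')$ is strictly smaller than both $b_2$ and $b_4$, the only possible repeat is $\beta(v_1')=b_1$, completing the claim.

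I expect the middle step to be the main obstacle. A single non-colourful $4$-path only narrows $\beta(v_2')$ to \emph{two} candidates, and because $\beta$ may well use colours outside the four primaries — in particular colours below $b_1$ or strictly between consecutive $b_i$ — one path by itself cannot separate $b_2$ from $b_3$ (nor from $b_1$). The key idea is that $v_4$ has \emph{two} neighbours $v_1$ and $v_3$ on the colourful cycle $C$, yielding two different non-colourful paths to $v_2$ whose constraint sets overlap in exactly $\{b_2\}$. Everything else is routine bookkeeping: verifying distinctness of the vertices in each path and checking that in each colour list the only colour that $\beta(v_2')$ or $\beta(v_1')$ is free to coincide with is the intended primary.
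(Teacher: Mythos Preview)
Your argument is correct and is essentially the paper's own proof stated in contrapositive form: the paper assumes $\beta(v'_2)\neq b_2$, concludes that one of the two paths $v'_2v_4v_3v_2$, $v'_2v_4v_1v_2$ is colourful, and derives the forbidden edge $v'_2v_2$ (contradicting $\alpha(v'_2)=\alpha(v_2)$), then handles $v'_1$ via the same path $v'_1v'_2v_4v_1$. Your intersection $\{b_1,b_2\}\cap\{b_2,b_3\}$ is just the contrapositive of ``at least one of the two paths is colourful,'' so the two arguments coincide.
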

\begin{proof}
Suppose that $\beta(v'_2) \neq b_2$. Then we have that either the path  $v'_2v_4v_3v_2$ or the path $v'_2v_4v_1v_2$ is colourful, which implies that $v'_2v_2 \in E(G)$, a contradiction since $\alpha(v'_2)=\alpha(v_2)$. Therefore we have $\beta(v'_2)=b_2$. Since $vv'_2v'_1$ is a decreasing path, this tells us that $\beta(v'_1)<b_2$. Thus, if $\beta(v'_1)\neq b_1$, the path $v'_1v'_2v_4v_1$ is colourful, implying that $v'_1v_1 \in E(G)$, which is a contradiction since $\alpha(v'_1)=\alpha(v_1)$. We can thus conclude that $\beta(v'_1)=b_1$.
\end{proof}

Now notice that the path $v'_1v'_2v_4v_3$ is colourful and hence we have that $v'_1v_3 \in E(G)$.  We call the vertices in the set $\{v_4,v_3,v_2,v_1,v'_2,v'_1\}$ ``forced vertices''. The vertices of $G$ other than these are called ``optional vertices''. Figure~\ref{fig:4} shows the subgraph induced in $G$ by the forced vertices. From our previous observations, we have that for any forced vertex $w$, $\beta(w)=b_{\alpha(w)}$. The following two observations about forced vertices are easy to verify.

\begin{figure}
\renewcommand{\defradius}{0.23}
\renewcommand{\vertexset}{(v4,5,3),(v3,5.5,2),(v2,6,1),(v1,6.5,0),(v'2,4.5,2),(v'1,4,1)}
\renewcommand{\edgeset}{(v4,v3),(v3,v2),(v2,v1),(v4,v'2),(v'2,v'1),(v4,v1,,,1),(v'1,v3)}
\begin{center}
\begin{tikzpicture}
\drawgraph
\node at (\xy{v4}) {$v_4$};
\node at (\xy{v3}) {$v_3$};
\node at (\xy{v2}) {$v_2$};
\node at (\xy{v1}) {$v_1$};
\node at (\xy{v'2}) {$v'_2$};
\node at (\xy{v'1}) {$v'_1$};
\end{tikzpicture}
\end{center}
\caption{The forced vertices when $k=4$.}\label{fig:4}
\end{figure}

\begin{observation}\label{obs:3path}
Let $u$ be a forced vertex. For $X\subseteq\{b_1,b_2,b_3,b_4\}\setminus\{\beta(u)\}$ such that $|X|=2$, there exists a colourful path on 3 vertices starting at $u$ that sees exactly the colours in $X\cup\{\beta(u)\}$ and contains only forced vertices.
\end{observation}

\begin{observation}\label{obs:nonadj}
Let $x,y$ be forced vertices such that $xy\notin E(G)$ and $\beta(x)\neq\beta(y)$. Then either there exists a forced vertex $x'\in N(x)$ such that $\beta(x')=\beta(y)$ or there exists a forced vertex $y'\in N(y)$ such that $\beta(y')=\beta(x)$.
\end{observation}

\begin{lemma}\label{lem:optionalprops}
Let $u$ be an optional vertex. If $N(u)$ contains a forced vertex $x$, then there exist forced vertices $y,z$ such that $uxzyu$ is a colourful cycle.
\end{lemma}
\begin{proof}
Choose some $X\subseteq\{b_1,b_2,b_3,b_4\}\setminus\{\beta(u),\beta(x)\}$ such that $|X|=2$. From Observation~\ref{obs:3path}, there exists a colourful path $xzy$, where $z$ and $y$ are forced vertices, that sees exactly the colours in $X\cup\{\beta(x)\}$. Clearly, $uxzy$ is a colourful path, which implies by Observation~\ref{obs:colpath} that $uxzyu$ is a colourful cycle.
\end{proof}
\begin{lemma}\label{lem:optforcedneigh}
Every optional vertex is adjacent to at least one forced vertex.
\end{lemma}
\begin{proof}
Consider the set of all optional vertices that have no forced vertices as neighbours. For the sake of contradiction, assume that this set is nonempty. Let $w$ be a vertex in this set that is closest to a forced vertex. As $G$ is connected, $w$ has a neighbour $w'$ such that $w'$ is an optional vertex and $N(w')$ contains a forced vertex. From Lemma~\ref{lem:optionalprops}, there is a colourful cycle $w'xzyw'$, where $x,z,y$ are all forced vertices. If $\beta(w)\neq\beta(z)$, then at least one of the paths $ww'xz$ or $ww'yz$ will be a colourful path, and by Observation~\ref{obs:colpath} we will have $wz\in E(G)$, which contradicts the fact that there was no forced vertex in $N(w)$. We can therefore assume that $\beta(w)=\beta(z)$. Notice that $x$ and $y$ are two nonadjacent forced vertices with $\beta(x)\neq\beta(y)$. By Observation~\ref{obs:nonadj}, we then get that either $N(x)$ contains a forced vertex $x'$ such that $\beta(x')=\beta(y)$ or $N(y)$ contains a vertex forced vertex $y'$ such that $\beta(y')=\beta(x)$. In the former case, $ww'xx'$ is a colourful path and in the latter case, $ww'yy'$ is a colourful path. By Observation~\ref{obs:colpath}, we now have that either $wx'\in E(G)$ or $wy'\in E(G)$. This again contradicts the fact that there are no forced vertices in $N(w)$.
\end{proof}
Let $S_1$ denote the set of optional vertices adjacent to at least one of the forced vertices $\{v_4,v_2,v'_1\}$ and let $S_2$ denote the set of optional vertices adjacent to at least one of the forced vertices $\{v_3,v_1,v'_2\}$.

\begin{lemma}\label{lem:s1s2}
\begin{enumerate}
\itemsep 0in
\renewcommand{\theenumi}{\textit{(\roman{enumi})}}
\renewcommand{\labelenumi}{\textit{(\roman{enumi})}}
\item\label{lem:s1s2disjoint} $S_1$ and $S_2$ are disjoint, and
\item\label{lem:s1s2independent} $S_1$ and $S_2$ are both independent sets.
\end{enumerate}
\end{lemma}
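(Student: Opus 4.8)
The plan is to reduce everything to the two contradiction templates permitted by $g(G)=4$: a triangle, and (via Observation~\ref{obs:colpath}) a colourful path on $4$ vertices whose endvertices are non-adjacent. Write $A=\{v_4,v_2,v'_1\}$ and $B=\{v_3,v_1,v'_2\}$ for the two triples defining $S_1$ and $S_2$. First I would record the structure of the forced subgraph: the seven edges among forced vertices established above ($v_4v_3,v_3v_2,v_2v_1,v_1v_4,v_4v'_2,v'_2v'_1,v'_1v_3$) all join $A$ to $B$, and each of $A,B$ is an independent set, since any edge inside $A$ or $B$ would have a common forced neighbour (for example $v_2v'_1$ would close a triangle through $v_3$), which $g(G)=4$ forbids. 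Hence the forced vertices induce a bipartite graph with parts $A$ and $B$, and moreover any two vertices of $A$ (resp.\ $B$) have a common neighbour in $B$ (resp.\ $A$).

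For part~\ref{lem:s1s2disjoint}, suppose an optional vertex $w$ is adjacent to some $a\in A$ and some $b\in B$. If $ab\in E(G)$ then $wab$ is a triangle; this disposes of seven of the nine pairs $(a,b)$. The two remaining pairs are the equal-colour non-edges $\{v_2,v'_2\}$ and $\{v_1,v'_1\}$ (of colours $b_2$ and $b_1$). For these I would note that $\beta(w)$ avoids the shared colour and split on whether $\beta(w)$ is another primary colour or a non-primary one; in each case two colourful paths on $4$ vertices starting at $w$ (for instance $w\,v_2\,v_1\,v_4$ and $w\,v'_2\,v'_1\,v_3$) have as far endpoints two \emph{adjacent} forced vertices, so Observation~\ref{obs:colpath} forces $w$ adjacent to both and we obtain a triangle.

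For part~\ref{lem:s1s2independent}, take $w,w'\in S_1$ with $ww'\in E(G)$; it suffices to force a common neighbour of $w$ and $w'$, since together with the edge $ww'$ this is a triangle. By part~\ref{lem:s1s2disjoint} neither $w$ nor $w'$ has a neighbour in $B$, and a shared neighbour in $A$ is already a triangle, so $w$ is adjacent to some $a\in A$ and $w'$ to some $a'\in A$ with $a\ne a'$ and (by independence of $A$) $aa'\notin E(G)$. The path $a\,w\,w'\,a'$ then has non-adjacent endpoints, so if it is colourful Observation~\ref{obs:colpath} gives a contradiction; since $\beta(w)\ne\beta(w')$ and the colours of $a,a'$ differ, the only obstruction is a clash $\beta(w')=\beta(a)$ or $\beta(w)=\beta(a')$. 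Assuming the latter fails, I would pick a common forced neighbour $z\in B$ of $a$ and $a'$ with $\beta(z)\ne\beta(w)$; then $w\,a\,z\,a'$ is colourful, forcing $wa'\in E(G)$ and hence the triangle $ww'a'$.

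The main obstacle is exactly these clash cases. A suitable $z$ above exists whenever $a,a'$ have two common forced neighbours, but the pair $\{v_2,v'_1\}$ shares only $v_3$, and when in addition both clashes occur simultaneously the single reroute breaks down; there I expect to force the required adjacency in two steps (first forcing $w$ onto a third vertex of $A$ such as $v_4$, and then onto $a'$), tracking which primary colours remain available at each step. All of this stays within the finite bipartite picture of the forced vertices and the two contradiction templates, so the work is bookkeeping rather than a new idea. Finally, the statements for $S_2$ follow verbatim with the roles of $A$ and $B$ interchanged.
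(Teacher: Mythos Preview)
Your treatment of part~\ref{lem:s1s2disjoint} matches the paper's: once the seven $A$--$B$ edges rule out all but the equal-colour pairs $\{v_1,v'_1\}$ and $\{v_2,v'_2\}$, a short case split on $\beta(w)$ together with one or two colourful $4$-paths forces $w$ onto a forced vertex adjacent to one you already have, yielding a triangle. (Your displayed example paths $w\,v_2\,v_1\,v_4$ and $w\,v'_2\,v'_1\,v_3$ do not simultaneously work for every value of $\beta(w)$, but the right substitute is easy to find in each case, exactly as the paper does.)

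For part~\ref{lem:s1s2independent} you are working much harder than necessary, and you have not actually closed the argument. The paper's proof is a one-line pigeonhole: Lemma~\ref{lem:optionalprops} (already established) says every optional vertex with a forced neighbour has at least \emph{two} forced neighbours, and part~\ref{lem:s1s2disjoint} forces both of them into the three-element set $A$ for any $w\in S_1$. Hence any two vertices of $S_1$ share a neighbour in $A$ and cannot be adjacent. You instead assume only a single forced neighbour $a$ of $w$ and $a'$ of $w'$ and try to manufacture a shared neighbour by colourful-path rerouting. The clash cases you flag are real obstacles to that plan, and ``first forcing $w$ onto a third vertex of $A$ such as $v_4$'' is not yet a proof: for, say, $\{a,a'\}=\{v_2,v'_1\}$ with the double clash $\beta(w)=b_1$, $\beta(w')=b_2$, or with $\beta(w)=b_3$, you must still exhibit concrete colourful $4$-paths landing both $w$ and $w'$ on $v_4$ and check no colour collision occurs. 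All of this can be carried out, but what you would be doing case by case is exactly re-deriving the ``second forced neighbour'' that Lemma~\ref{lem:optionalprops} hands you for free.
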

\begin{proof}
First let us show that $S_1$ and $S_2$ are disjoint. Suppose that there is a vertex $w\in S_1\cap S_2$.
Consider any two forced vertices $x$ and $y$ in $N(w)$ such that
$x\in \{v_4,v_2,v'_1\}$ and $y\in \{v_3,v_1,v'_2\}$. As $G$ is triangle-free, we only have the two possibilities $(x=v'_1, y=v_1)$ or $(x=v_2, y=v'_2)$.  Note that this implies that $|N(w)\cap\{v_4,v_2,v'_1\}|=|N(w)\cap\{v_3,v_1,v'_2\}|=1$. This lets us conclude that the set of forced vertices in $N(w)$ is either $\{v'_1,v_1\}$ or $\{v_2,v'_2\}$. We now have that the two forced vertices in the neighbourhood of $w$ have the same colour. Then there cannot exist a colourful cycle containing $w$ in which all the other vertices are forced vertices, contradicting Lemma~\ref{lem:optionalprops}.
This proves~\ref{lem:s1s2disjoint}.

From~\ref{lem:s1s2disjoint}, we have that for each vertex $w\in S_1$, the forced vertices in $N(w)$ all lie in $\{v_4,v_2,v'_1\}$ and for each vertex $w'\in S_2$, the forced vertices in $N(w')$ all lie in $\{v_3,v_1,v'_2\}$. Since we know from Lemma~\ref{lem:optionalprops} and Lemma~\ref{lem:optforcedneigh} that each vertex in $S_1\cup S_2$ has at least two forced vertices in their neighbourhood, we can conclude that each vertex in $S_1$ has at least two neighbours from $\{v_4,v_2,v'_1\}$ and that each vertex in $S_2$ has at least two neighbours from $\{v_3,v_1,v'_2\}$.
This means that for any two $w,w'\in S_1$, there is at least one vertex in $\{v_4,v_2,v'_1\}$ that is a neighbour of both $w$ and $w'$. As $G$ is triangle-free, we can conclude that $ww'\notin E(G)$. For the same reason, for any two vertices $w,w'\in S_2$, we have $ww'\notin E(G)$. This proves~\ref{lem:s1s2independent}.
\end{proof}

From Lemma~\ref{lem:s1s2}\ref{lem:s1s2disjoint}, we know that there are no edges between $S_1$ and $\{v_3$, $v_1,v'_2\}$. Similarly, there are no edges between $S_2$ and $\{v_4,v_2,v'_1\}$. Now, by Lemma~\ref{lem:s1s2}\ref{lem:s1s2independent}, we have that
$S_1 \cup\{v_3,v_1,v'_2\}$ is an independent set and $S_2 \cup \{v_4,v_2,v'_1\}$ is an independent set. Since from Lemma~\ref{lem:optforcedneigh}, we know that $V(G)=S_1\cup S_2\cup\{v_4,v_3,v_2,v_1,v'_2,v'_1\}$, this tells us that $G$ is bipartite, which contradicts the assumption that $\chi(G)=4$. Therefore, there can be no properly coloured graph $G$ such that $g(G)=\chi(G)=4$ with no induced colourful path on 4 vertices. This completes the proof of Theorem~\ref{thm:main} for the case $k=4$.


\subsection{Case when $k>4$}

We now use the the fact that $g(G)=k>4$ to complete the proof. In this case we define forced vertices, primary colours and the primary cycle in a more general way. First, we give a useful lemma.

\begin{lemma}\label{lem:colourfulcycle}
Let $y_1y_2\ldots y_ky_1$ be a colourful $k$-cycle.
Let $z\in N(y_i)\setminus \{y_{i-1},y_{i+1}\}$ for some $i\in\{1,2,\ldots,k\}$. Then $\beta(z)\in \{\beta(y_1),\ldots,\beta(y_k)\}\setminus \{\beta(y_{i-1}),\beta(y_i),$ $\beta(y_{i+1})\}$. (Here we assume that $y_{i+1}=y_1$ when $i=k$ and that $y_{i-1}=y_k$ when $i=1$.)
\end{lemma}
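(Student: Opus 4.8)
The plan is to assume the conclusion fails and, in each case, construct a colourful path on $k$ vertices through $z$; then Observation~\ref{obs:colpath} forces a chord that closes a $4$-cycle, contradicting $g(G)=k>4$. Throughout, indices are read cyclically modulo $k$ as in the statement. First observe that $\beta(z)\neq\beta(y_i)$, since $z\in N(y_i)$ and $\beta$ is proper; this already eliminates $\beta(y_i)$. The two paths I would use are the \emph{forward} path $P_f=z\,y_i\,y_{i+1}\cdots y_{i-2}$, obtained by deleting $y_{i-1}$ from the cycle and attaching $z$ at $y_i$, and the symmetric \emph{backward} path $P_b=z\,y_i\,y_{i-1}\cdots y_{i+2}$, obtained by deleting $y_{i+1}$. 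Each consists of $z$ together with the $k-1$ cycle vertices other than the deleted one, hence has exactly $k$ vertices, and each is a genuine path since $z\sim y_i$ and the remaining edges are cycle edges.

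The case split is on where $\beta(z)$ lies. The colours appearing on the $y$-vertices of $P_f$ are exactly the $k-1$ cycle colours other than $\beta(y_{i-1})$, so $P_f$ is colourful precisely when $\beta(z)$ is either $\beta(y_{i-1})$ or a colour not used on the cycle at all. In both of these subcases I would argue identically: $P_f$ is a colourful path on $k$ vertices, so Observation~\ref{obs:colpath} forces its endpoints to be adjacent, i.e. $z\sim y_{i-2}$. Combined with the cycle edges $y_{i-2}y_{i-1}$ and $y_{i-1}y_i$ and the edge $z\,y_i$, this yields the $4$-cycle $z\,y_i\,y_{i-1}\,y_{i-2}\,z$. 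One checks these four vertices are distinct: $y_i,y_{i-1},y_{i-2}$ are three consecutive (distinct) cycle vertices since $k>4$, and $z$ is not on the cycle at all, because if $\beta(z)$ is a non-cycle colour this is immediate, while if $\beta(z)=\beta(y_{i-1})$ then the only cycle vertex of that colour is $y_{i-1}$, which is excluded by $z\neq y_{i-1}$. Thus the $4$-cycle is genuine, contradicting $g(G)=k>4$.

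The only remaining possibility is $\beta(z)=\beta(y_{i+1})$, which I would handle symmetrically using $P_b$: its $y$-colours are all cycle colours except $\beta(y_{i+1})$, so $P_b$ is colourful, Observation~\ref{obs:colpath} gives $z\sim y_{i+2}$, and $z\,y_i\,y_{i+1}\,y_{i+2}\,z$ is again a $4$-cycle, contradicting the girth. Collecting the cases rules out $\beta(z)$ being a non-cycle colour and rules out $\beta(z)\in\{\beta(y_{i-1}),\beta(y_i),\beta(y_{i+1})\}$, which is exactly the asserted conclusion. There is no deep obstacle here; the proof is a careful case analysis, and the one point genuinely needing attention is the distinctness check guaranteeing that the forced chord closes a true $4$-cycle rather than a triangle or degenerate configuration. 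This is also precisely where the hypothesis $k>4$ is essential: when $k=4$ the ``$4$-cycle'' produced has length equal to the girth and yields no contradiction, which is why the lemma is restricted to the case $k>4$.
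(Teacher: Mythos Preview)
Your proof is correct and follows essentially the same route as the paper: assume the conclusion fails, build the colourful path $zy_iy_{i+1}\cdots y_{i-2}$ (or its mirror image when $\beta(z)=\beta(y_{i+1})$), apply Observation~\ref{obs:colpath} to force the chord, and close a $4$-cycle contradicting $g(G)=k>4$. The only cosmetic difference is that the paper disposes of $z\notin\{y_1,\ldots,y_k\}$ up front by noting that every colourful $k$-cycle is induced, whereas you verify distinctness case by case via colours; both work.
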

\begin{proof}
Clearly, $z\not\in\{y_1,y_2,\ldots,y_k\}$ as every colourful cycle is an induced cycle. Suppose $\beta(z)\notin \{\beta(y_1),\ldots,\beta(y_k)\}\setminus \{\beta(y_{i-1}),\beta(y_i),\beta(y_{i+1})\}$. Clearly, $\beta(z)\neq\beta(y_i)$. Suppose that $\beta(z) \neq \beta(y_{i+1})$. Then observe that $zy_iy_{i+1}\ldots y_ky_1\ldots y_{i-2}$ is a colourful path on $k$ vertices and hence $zy_{i-2}\in E(G)$. This implies that $zy_iy_{i-1}y_{i-2}z$ is a 4-cycle in $G$, which is a contradiction to the fact that $g(G)=k>4$. Therefore, $\beta(z)=\beta(y_{i+1})$. Then the path $zy_iy_{i-1}\ldots y_1y_k\ldots y_{i+2}$ is a colourful path and the same reasoning as above tells us that there is a 4-cycle $zy_iy_{i+1}y_{i+2}z$ in $G$, which is again a contradiction.
\end{proof}
\begin{corollary}\label{cor:colourfulcycle}
Let $y_1y_2\ldots y_ky_1$ be a colourful $k$-cycle. Let $z\in N(y_i)$ for some $i\in\{1,2,\ldots,k\}$. Then $\beta(z)\in\{\beta(y_1),\ldots,\beta(y_k)\}$.
\end{corollary}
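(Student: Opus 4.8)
The plan is to derive Corollary~\ref{cor:colourfulcycle} as a near-immediate consequence of Lemma~\ref{lem:colourfulcycle}, handling the colours that the lemma excludes. Lemma~\ref{lem:colourfulcycle} already tells us that any $z\in N(y_i)\setminus\{y_{i-1},y_{i+1}\}$ has $\beta(z)\in\{\beta(y_1),\ldots,\beta(y_k)\}\setminus\{\beta(y_{i-1}),\beta(y_i),\beta(y_{i+1})\}$, so in particular $\beta(z)\in\{\beta(y_1),\ldots,\beta(y_k)\}$, which is exactly the conclusion we want. The only gap between the lemma and the corollary is that the corollary ranges over all of $N(y_i)$, whereas the lemma excludes the two cycle-neighbours $y_{i-1}$ and $y_{i+1}$.

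So first I would split $N(y_i)$ into the two cycle-neighbours and the rest. If $z\in\{y_{i-1},y_{i+1}\}$, then $z$ is itself a vertex of the cycle, so trivially $\beta(z)\in\{\beta(y_1),\ldots,\beta(y_k)\}$ and there is nothing to prove. If instead $z\in N(y_i)\setminus\{y_{i-1},y_{i+1}\}$, then Lemma~\ref{lem:colourfulcycle} applies directly and gives $\beta(z)\in\{\beta(y_1),\ldots,\beta(y_k)\}\setminus\{\beta(y_{i-1}),\beta(y_i),\beta(y_{i+1})\}\subseteq\{\beta(y_1),\ldots,\beta(y_k)\}$. In both cases the desired membership holds, which completes the proof.

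There is essentially no obstacle here: the corollary is a deliberate weakening of the lemma obtained by (a) dropping the exclusion of the three forbidden colours in the target set, and (b) enlarging the neighbourhood from $N(y_i)\setminus\{y_{i-1},y_{i+1}\}$ to all of $N(y_i)$, the latter being absorbed by the observation that the two extra neighbours are themselves cycle vertices. The statement is presumably phrased separately as a corollary because this cleaner, unconditional form (every neighbour of a cycle vertex carries a cycle colour) is the version that will be convenient to cite repeatedly in the subsequent argument for $k>4$, without having to track the excluded neighbours or excluded colours each time.
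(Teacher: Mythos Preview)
Your proposal is correct and matches the paper's approach: the paper states the corollary without proof, as it follows immediately from Lemma~\ref{lem:colourfulcycle} by exactly the case split you describe (the two cycle-neighbours are trivially cycle-coloured, and the lemma handles the rest).
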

\medskip

\noindent\textbf{The vertex \textit{v}:} Fix $v$ to be a vertex which has the largest label. Since $\alpha$ is also a proper vertex colouring of $G$, it should use at least $k$ labels. In other words, $\alpha(v)\geq k$. (For the proof to go through, we could have chosen any vertex with label $k$ as $v$. But as Lemma~\ref{lem:largestlabel} shows, the vertex with largest label will have label $k$.)
\medskip

\noindent\textbf{Primary cycle:}
By applying Lemma~\ref{lem:path} to $v$ and the set of labels $\{1,2,\ldots,k-1\}$, we can conclude that there exists a decreasing path $v_kv_{k-1}\ldots v_1$ where $v_k=v$ and such that $\alpha(v_i)=i$ for all $1\leq i<k$ and $\beta(v_i)<\beta(v_{i+1})$ for all $1\leq i\leq k-1$. Since this path is colourful, by Observation~\ref{obs:colpath}, $vv_{k-1}v_{k-2}\ldots v_1v$ is a colourful cycle, which we shall call the ``primary cycle''. For $1\leq i\leq k$, we shall denote by $b_i$ the colour $\beta(v_i)$. The set of colours $\{b_k,b_{k-1},\ldots,b_1\}$ shall be called the set of ``primary colours''.
\medskip

\noindent\textbf{Forced vertices:} A vertex $u\in V(G)$ is said to be a ``forced vertex'' if there is a decreasing path from $v$ to $u$. Note that every vertex on the primary cycle is a forced vertex.

\begin{lemma}\label{lem:largestlabel}
$\alpha(v)=k$. Hence, for $1\leq i\leq k$, $\alpha(v_i)=i$.
\end{lemma}
\begin{proof}
Suppose for the sake of contradiction that $\alpha(v)>k$.
By Lemma~\ref{lem:path}, there exists a decreasing path $y_{k+1}y_k\ldots y_1$ where $y_{k+1}=v$ and for $1\leq i\leq k$, we have $\alpha(y_i)=i$ and $\beta(y_i)<\beta(y_{i+1})$.
As the paths $y_{k+1}y_k\ldots y_2$ and $y_ky_{k-1}\ldots y_1$ are both colourful, it must be the case that $y_{k+1}y_2,y_ky_1\in E(G)$. But then, $y_{k+1}y_2y_1y_ky_{k+1}$ is a cycle on four vertices in $G$, which is a contradiction to the fact that $g(G)=k>4$.
\end{proof}

\begin{lemma}\label{lem:uis}
For each $i\in\{1,2,\ldots,k-1\}$ there is exactly one vertex $u_i$ in $N(v)$ with label $i$. Moreover, $\beta(u_i)=b_i$ and there is a colourful cycle $C_i$ containing $u_i$ and $v$ that contains only forced vertices.
\end{lemma}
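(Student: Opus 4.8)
The plan is to fix an arbitrary neighbour $u$ of $v$ with $\alpha(u)=i$ and prove $\beta(u)=b_i$; the existence of at least one such $u$ for every $i\in\{1,\dots,k-1\}$ is immediate from the greedy rule, since $\alpha(v)=k$ forces every label $1,\dots,k-1$ to occur on a neighbour of $v$ (a missing label would have been assigned to $v$ instead of $k$). First I would record two easy facts about any such $u$. By Corollary~\ref{cor:colourfulcycle} applied to the primary cycle, $\beta(u)$ is a primary colour, say $\beta(u)=b_j$ with $1\le j\le k-1$ (it cannot be $b_k$ since $u\in N(v)$). Moreover $u$ is automatically forced, since $vu$ is itself a decreasing path: $\alpha(u)=i<k=\alpha(v)$ and $\beta(u)=b_j<b_k=\beta(v)$. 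It therefore remains to pin down $j=i$ and to exhibit the forced cycle $C_i$.

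To show $j=i$ I would argue by contradiction, aiming to build a colourful $k$-cycle through $u$ and $v$ whose colour set omits $b_i$; then, since $v_i$ is adjacent to $v_{i+1}$ which lies on the cycle, applying Lemma~\ref{lem:colourfulcycle} at $v_{i+1}$ to its neighbour $v_i$ forces $b_i=\beta(v_i)$ to appear on the cycle, a contradiction. Using Lemma~\ref{lem:path}, take a decreasing path $u\,w_{i-1}\cdots w_1$ with $\alpha(w_m)=m$ and all colours strictly below $\beta(u)=b_j$. When $j<i$, the path $w_1\cdots w_{i-1}\,u\,v_k v_{k-1}\cdots v_{i+1}$ has exactly $k$ vertices and is automatically colourful: both the $w$-colours and $b_j$ lie strictly below $b_i$, so they cannot clash with the top colours $b_{i+1},\dots,b_k$, and by Observation~\ref{obs:colpath} the path closes to a colourful $k$-cycle. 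Since all of its colours are either among $b_{i+1},\dots,b_k$ or strictly below $b_i$, the colour $b_i$ is absent, which yields the contradiction. This is exactly the generalisation of the $k=4$ argument and disposes of the easy direction. The hard part will be the case $j>i$ (more generally, any ``middle'' value $2\le j\le k-2$ for which $v_k$ is an interior vertex of the path obtained by deleting $v_j$ from the primary cycle): here $b_j$ coincides with one of the top colours, so the same path is no longer colourful, and the primary cycle alone cannot be re-routed to avoid $v_j$ while keeping $u$ as an endpoint. One is then forced to splice in an auxiliary decreasing path from Lemma~\ref{lem:path}, whose colours are not controlled a priori; the crux is to choose this path so that it avoids re-introducing the colours $b_i$ and $b_j$. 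I expect this colour-control issue to be the main obstacle, and it is precisely the point at which $k>4$ becomes genuinely harder than $k=4$, since for $k=4$ no such middle value of $j$ exists.

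Once $\beta(u)=b_i$ is established (so $j=i$), the cycle is essentially free. The path $w_1\cdots w_{i-1}\,u\,v_k v_{k-1}\cdots v_{i+1}$ from the previous step is now a colourful $k$-path and hence, by Observation~\ref{obs:colpath}, closes to a colourful $k$-cycle through $u$ and $v$. Every vertex on it is forced: the $v_m$ lie on the primary cycle, and each $w_m$ is reached from $v$ by the decreasing path $v\,u\,w_{i-1}\cdots w_m$. This cycle is the required $C_i$.

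Finally, uniqueness would follow from $C_i$ together with Lemma~\ref{lem:colourfulcycle}. Suppose $u$ and $u'$ were two distinct neighbours of $v$ with label $i$; by the colour claim both have colour $b_i$. Let $p$ be the neighbour of $v$ on $C_i$ other than $u$; since $C_i$ is colourful and $\beta(u)=b_i$, we have $\beta(p)\neq b_i$, so $u'\notin\{u,p\}$. Applying Lemma~\ref{lem:colourfulcycle} to $u'\in N(v)\setminus\{u,p\}$ gives $\beta(u')\in\{\beta(y)\colon y\in C_i\}\setminus\{b_i,b_k,\beta(p)\}$, whence $\beta(u')\neq b_i$, contradicting $\beta(u')=b_i$. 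Hence the neighbour $u_i$ with label $i$ is unique, completing the proof.
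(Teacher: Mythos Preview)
Your treatment of the case $j<i$, the construction of $C_i$ once $\beta(u)=b_i$ is established, and the uniqueness argument are all correct and essentially match the paper. The genuine gap is exactly where you locate it: the case $j>i$. You anticipate having to splice in an auxiliary decreasing path and somehow control its colours so as to avoid $b_i$ and $b_j$; it is not clear this can be made to work, and in any case the paper does not attempt it.

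The paper's device is \emph{backward induction on $i$}, starting from $i=k-1$ with $u_{k-1}:=v_{k-1}$ and the primary cycle as base case. At stage $i$, suppose $\beta(u_i)=b_j$ with $j>i$. The induction hypothesis has already produced a neighbour $u_j$ of $v$ with $\beta(u_j)=b_j$ together with a colourful cycle $C_j$ through $u_j$ and $v$, so that $u_jv$ is an edge of $C_j$. Since $\alpha(u_i)=i\neq j=\alpha(u_j)$ we have $u_i\neq u_j$, hence $u_i\notin V(C_j)$; but $u_i\in N(v)$ with $\beta(u_i)=b_j=\beta(u_j)$ now contradicts Lemma~\ref{lem:colourfulcycle} applied to $C_j$ at $v$. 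Thus $\beta(u_i)\le b_i$, and then your $j<i$ argument (which the paper also uses, phrased as ``$v_i$ is adjacent to $v_{i+1}\in C_i$, so $b_i$ must appear on $C_i$'') pins down $\beta(u_i)=b_i$. The key point is that no new colour-controlled path is needed for $j>i$: one simply reuses the cycle $C_j$ manufactured at an earlier stage of the induction. This also streamlines uniqueness in the paper's version: a second neighbour $u\neq u_i$ with $\alpha(u)=i$ has some primary colour $b_j$, and the already-built $C_j$ (for whatever $j$) rules it out by the same application of Lemma~\ref{lem:colourfulcycle}, without first having to argue that $\beta(u)=b_i$.
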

\begin{proof}
For each $i\in\{1,2,\ldots k-1\}$, by applying Lemma~\ref{lem:path} to $v$ and the set $\{i\}$, we get that there exists a decreasing path $vu_i$ where $\alpha(u_i)=i$ and $\beta(u_i)<\beta(v)=b_k$.
We shall choose $u_{k-1}$ to be $v_{k-1}$.
Because $u_i$ is adjacent to $v$ which is on the primary cycle, by Corollary~\ref{cor:colourfulcycle}, we know that $\beta(u_i)$ is a primary colour. 

We claim that $\beta(u_i)=b_i$ and that there is a colourful cycle containing $v$ and $u_i$ that contains only forced vertices. We shall use backward induction on $i$ prove this. Consider the base case when $i=k-1$. Since $u_{k-1}=v_{k-1}$, we know that $\beta(u_{k-1})=b_{k-1}$ and that there is a colourful cycle (the primary cycle) that contains $u_{k-1}$ and $v$ and also contains only forced vertices. Thus the claim is true for the base case. Let us assume that the claim has been proved for $u_{k-1},u_{k-2},\ldots,u_{i+1}$. If $\beta(u_i)=b_j>b_i$, then $b_j\in\{b_{i+1},b_{i+2},\ldots,b_{k-1}\}$ (recall that $\beta(u_i)$ is a primary colour). By the induction hypothesis, we know that the vertex $u_j\in N(v)$ has $\beta(u_j)=b_j$ and that there is a colourful cycle $C_j$ containing $u_j$ and $v$. Note that $u_j\neq u_i$ (as $\alpha(u_i)\neq\alpha(u_j)$), but $\beta(u_j)=\beta(u_i)=b_j$. Therefore, as $C_j$ contains $u_j$ and is a colourful cycle, it cannot contain $u_i$. Since $u_i$ is adjacent to $v$ which is on $C_j$, and $\beta(u_i)=b_j$, we now have a contradiction to Lemma~\ref{lem:colourfulcycle} (note that $u_jv$ is an edge of $C_j$ as every colourful cycle is a chordless cycle). So it has to be the case that $\beta(u_i)\leq b_i$.
By Lemma~\ref{lem:path}, there exists a path $y_iy_{i-1}y_{i-2}\ldots y_1$, where $y_i=u_i$, such that for $1\leq j\leq i-1$, $\alpha(y_j)=j$ and $\beta(y_j)<\beta(y_{j+1})$. Notice that $y_1y_2\ldots y_ivv_{k-1}\ldots v_{i+1}$ is a colourful path and therefore by Observation~\ref{obs:colpath}, $C_i=y_1y_2\ldots y_ivv_{k-1}\ldots v_{i+1}y_1$ is a colourful cycle containing both $u_i$ and $v$. Since $v_i$ is adjacent to $v_{i+1}$ which is on $C_i$, by Corollary~\ref{cor:colourfulcycle}, we know that there is some vertex $z$ on $C_i$ such that $\beta(z)=b_i$. Clearly, $z\in\{y_i,y_{i-1},\ldots,y_1\}$. Since $\beta(z)=b_i\geq\beta(y_i)$ (recall that $u_i=y_i$) and $y_iy_{i-1}\ldots y_1$ is a decreasing path, we have $z\notin\{y_{i-1},y_{i-2},\ldots,y_1\}$. Therefore, we have $z=y_i$, which implies that $\beta(u_i)=b_i$. Notice that each $y_j\in\{y_i,y_{i-1},\ldots,y_1\}$, because of the decreasing path $vy_iy_{i-1}\ldots y_j$, is a forced vertex. Thus, $C_i$ is a colourful cycle containing $u_i$ and $v$ that contains only forced vertices. This shows that for any $i\in\{1,2,\ldots,k-1\}$, $\beta(u_i)=b_i$ and there is a colourful cycle containing $v$ and $u_i$ that contains only forced vertices.

We shall now show that $u_i$ is the only vertex in $N(v)$ which has the label $i$. Suppose that there is a vertex $u\in N(v)$ such that $\alpha(u)=i$ and $u\neq u_i$. Since $u$ is adjacent to a colourful cycle containing only primary colours (the primary cycle), we can conclude from Corollary~\ref{cor:colourfulcycle} that $\beta(u)$ is a primary colour. Therefore, $\beta(u)=b_j$ for some $j\in\{1,2,\ldots,k-1\}$. From what we observed above, $\beta(u_j)=b_j$ and there exists a colourful cycle $C_j$ containing the vertices $v$ and $u_j$. Note that $u_j\neq u$ since if $j\neq i$, then $u_j$ and $u$ have different labels and if $j=i$, we know that $u_j\neq u$ (as we have assumed that $u_i\neq u$). Hence $u$ is not in $C_j$ (as $C_j$ already has a vertex $u_j$ with $\beta(u_j)=b_j$) but is adjacent to it. But now $C_j$ and $u$ contradict Lemma~\ref{lem:colourfulcycle} as $u_jv$ is an edge of $C_j$. Therefore, $u$ cannot exist.
\end{proof}

\begin{corollary}\label{cor:cyclewithv}
Let $C$ be any colourful cycle containing $v$. Then $C$ sees only primary colours.
\end{corollary}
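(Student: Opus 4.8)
The plan is to avoid analysing the internal structure of $C$ altogether, and instead to combine what we already know about the neighbourhood of $v$ with a cardinality count. The decisive observation is that Corollary~\ref{cor:colourfulcycle} should be applied not to the primary cycle but to the cycle $C$ itself.

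First I would pin down $N(v)$ exactly. Since $v$ was chosen to have the largest label and we have just shown that $\alpha(v)=k$, every label in $G$ is at most $k$, and because $\alpha$ is a proper colouring no neighbour of $v$ can carry the label $k$; hence every neighbour of $v$ has a label in $\{1,2,\ldots,k-1\}$. Lemma~\ref{lem:uis} then says that for each such label $i$ there is exactly one neighbour $u_i$, so $N(v)=\{u_1,\ldots,u_{k-1}\}$, and, again by Lemma~\ref{lem:uis}, $\beta(u_i)=b_i$. Thus the colours appearing on the neighbours of $v$ are precisely the $k-1$ primary colours $b_1,\ldots,b_{k-1}$.

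Now take any colourful cycle $C$ containing $v$. By definition a colourful cycle is a $k$-cycle whose $k$ vertices carry $k$ pairwise distinct colours, so $C$ uses exactly $k$ colours, one of which is $\beta(v)=b_k$. Applying Corollary~\ref{cor:colourfulcycle} to the colourful $k$-cycle $C$ and the vertex $v\in C$, every vertex of $N(v)$ must have its $\beta$-colour among the colours used by $C$. Hence each of $b_1,\ldots,b_{k-1}$ occurs on $C$, and together with $b_k=\beta(v)$ this forces all $k$ primary colours into the colour set of $C$. Since $C$ has only $k$ colours in total, its colour set must coincide with $\{b_1,\ldots,b_k\}$; that is, $C$ uses only primary colours.

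The only step that needs care is the first one, the exact determination of $N(v)$, which is where the hypotheses $\alpha(v)=k$ and Lemma~\ref{lem:uis} do the work; after that the statement is a pure counting argument and there is no genuine obstacle. It is worth noting that the tempting alternative of feeding $N(v)$ into Corollary~\ref{cor:colourfulcycle} \emph{through the primary cycle} would only re-derive that the neighbours of $v$ have primary colours (which we already know) and would say nothing about the colours elsewhere on $C$. The point is therefore to take $C$ as the ambient colourful cycle, so that the $k-1$ forced colours on $N(v)$, together with $\beta(v)$, exhaust the palette of $C$ by a size count.
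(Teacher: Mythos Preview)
Your argument is correct and coincides with the paper's proof: apply Corollary~\ref{cor:colourfulcycle} to the cycle $C$ at the vertex $v$ and each neighbour $u_j$, forcing all of $b_1,\ldots,b_{k-1}$ onto $C$, then count. The only difference is that you first pin down $N(v)=\{u_1,\ldots,u_{k-1}\}$ exactly, whereas the paper only uses the existence of each $u_j$ with $\beta(u_j)=b_j$; this extra step is correct but not needed for the argument.
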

\begin{proof}
Notice that from Lemma~\ref{lem:uis}, we know that for every primary colour $b_j\in\{b_1,b_2,\ldots,b_{k-1}\}$, there is a vertex $u_j$ with $\beta(u_j)=b_j$ that is adjacent to $v$. Because $v$ is in $C$, we can apply Corollary~\ref{cor:colourfulcycle} to $C$ and $u_j$ to conclude that $b_j$ is present in $C$. This means that every primary colour appears on at least one vertex of $C$. Since $C$ was a $k$-cycle, this means that $C$ sees only primary colours.
\end{proof}

The following corollary shows that $k\leq 6$. However, we do not need this fact to prove Theorem~\ref{thm:main}.

\begin{corollary}\label{cor:5or6}
$k$ is 5 or 6.
\end{corollary}
\begin{proof}
By Lemma~\ref{lem:uis}, there exists a vertex $u_2\in N(v)$ such that $\alpha(u_2)=2$ and $\beta(u_2)=b_2$. By applying Lemma~\ref{lem:path} to $u_2$ and the set $\{1\}$, we know that there exists $z\in N(u_2)$ such that $\alpha(z)=1$ and $\beta(z)<\beta(u_2)=b_2$. Now, the path $zu_2vv_{k-1}v_{k-2}\ldots v_3$ is a colourful path and hence by Observation~\ref{obs:colpath}, we have that $zv_3\in E(G)$. By just comparing labels, it is clear that $u_2\neq v_1$ and $z\neq v_2$. Further, $u_2\neq v_2$ as $vv_2\notin E(G)$ and $z\neq v_1$ as otherwise, there will be the triangle $vu_2zv$ in $G$. We then have the 6-cycle $vu_2zv_3v_2v_1v$ in $G$, which implies that $k=g(G)\leq 6$. Since $k>4$, we now have $k\in\{5,6\}$.
\end{proof}

\begin{lemma}\label{lem:primarycolours}
If $u\in V(G)$ is a forced vertex such that $\alpha(u)=i$, then $\beta(u)=b_i$. Moreover, if $P$ is any decreasing path from $v$ to $u$, then there is a colourful cycle which has $P$ as a subpath, contains only forced vertices, and sees exactly the primary colours.
\end{lemma}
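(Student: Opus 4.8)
The plan is to prove both assertions at once by induction, using the penultimate vertex of $P$ to invoke the inductive hypothesis. I would induct downward on the label $i=\alpha(u)$ (equivalently, assume the full statement for every forced vertex whose label exceeds $i$). The base case is $i=k$: since every decreasing path from $v$ strictly decreases labels and $\alpha(v)=k$ is the largest label, the only forced vertex with label $k$ is $v$ itself, for which $\beta(v)=b_k$ holds by definition and the primary cycle is the required colourful cycle (it has the one-vertex path $P=v$ as a subpath, and by construction consists only of forced vertices and primary colours).

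For the inductive step, write $P=v=p_0,p_1,\dots,p_s=u$ and set $w=p_{s-1}$, a forced vertex with $\alpha(w)=j>i$ reached by the decreasing path $P'=P-u$. By the inductive hypothesis applied to $w$ and $P'$, we get $\beta(w)=b_j$ together with a colourful $k$-cycle $C'$ having $P'$ as a subpath and consisting only of forced vertices; by Corollary~\ref{cor:cyclewithv} the colours on $C'$ are exactly the $k$ primary colours. Since $u\in N(w)$ and $w$ lies on $C'$, Lemma~\ref{lem:colourfulcycle} (or Corollary~\ref{cor:colourfulcycle}) already forces $\beta(u)$ to be a primary colour, say $\beta(u)=b_r$; as $P$ is decreasing we also have $b_r=\beta(u)<\beta(w)=b_j$, so $r<j$.

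It then remains to pin down $r=i$ and to build the cycle. For the lower bound $r\ge i$, I would apply Lemma~\ref{lem:path} to $u$ with the label set $\{1,2,\dots,i-1\}$ to obtain a decreasing path $u=q_i,q_{i-1},\dots,q_1$; concatenating it with $P$ yields a decreasing path from $v$ down to label $1$, all of whose vertices are forced. Splicing this path into a colourful $k$-cycle $C$ through $v$ (generalizing the construction of $C_i$ in Lemma~\ref{lem:uis}, now using $C'$ to supply vertices for the labels that $P$ skips) produces a colourful cycle on which, by Corollary~\ref{cor:cyclewithv}, every colour is primary; the vertices $q_1,\dots,q_{i-1}$ then carry $i-1$ distinct primary colours all strictly below $b_r$, which is possible only if $r\ge i$. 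For the upper bound $r\le i$, I would argue by contradiction as in the ``$\beta(u_i)\le b_i$'' step of Lemma~\ref{lem:uis}: if $r>i$, then exhibiting a colourful cycle on which $w$ has a cycle-neighbour of colour $b_r$ contradicts Lemma~\ref{lem:colourfulcycle}, since $u\in N(w)$ would repeat the colour of a neighbour of $w$ on the cycle. Combining the two bounds gives $\beta(u)=b_i$, and the cycle $C$ constructed above is the colourful cycle demanded by the ``moreover'' clause.

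The main obstacle is precisely this cycle construction in the inductive step: assembling a colourful $k$-cycle that simultaneously contains the prescribed path $P$, passes through $v$, and uses only forced vertices and primary colours. Unlike Lemma~\ref{lem:uis}, where the skipped labels formed a consecutive block filled directly by the primary cycle, here $P$ may skip an arbitrary subset of labels, so the missing-label vertices must be extracted from the inductively supplied cycle $C'$ and re-spliced around $u$ while keeping the colour set intact. Controlling these adjacencies---repeatedly invoking Observation~\ref{obs:colpath} to turn colourful $k$-paths into the required edges, and Lemma~\ref{lem:colourfulcycle} to forbid unwanted colours---is where the bulk of the technical work lies, and it is also what underwrites the upper bound $r\le i$.
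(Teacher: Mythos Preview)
Your overall strategy---backward induction on $\alpha(u)$, with the penultimate vertex $w$ supplying the inductive hypothesis, and the colour $\beta(u)=b_r$ pinned down by an upper and a lower bound---is exactly the paper's approach.

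There is, however, a genuine ordering issue in your sketch. You propose to establish $r\ge i$ first by constructing the colourful $k$-cycle $C$, and only afterwards to prove $r\le i$. But the cycle construction \emph{needs} $r\le i$ to go through. In the paper's construction one takes a fresh decreasing path $Q$ from $v$ (via Lemma~\ref{lem:path}) whose non-$v$ vertices carry exactly the labels in $\{i+1,\dots,k\}$ missing from $P_z=P-u$; by the inductive hypothesis every vertex on $Q\cup P_z$ has label $>i$ and hence colour in $\{b_{i+1},\dots,b_k\}$. Appending $u,q_{i-1},\dots,q_1$ then yields a colourful $k$-path \emph{only because} $\beta(u)\le b_i$ ensures these $i$ vertices have colours disjoint from $\{b_{i+1},\dots,b_k\}$. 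If $r>i$ were still possible at this stage, $b_r$ would occur twice and the path would not be colourful, so Observation~\ref{obs:colpath} could not be invoked. The paper therefore proves $r\le i$ \emph{first} (via the greedy-neighbour argument you correctly identify), and only then builds the cycle, which simultaneously gives $r\ge i$ and the ``moreover'' clause.

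A second, smaller point: the missing-label vertices are supplied by this fresh decreasing path $Q$ from $v$, not by the inductively given cycle $C'$. The cycle $C'$ tells you that $\beta(u)$ is primary (via Corollary~\ref{cor:colourfulcycle}) and feeds the upper-bound contradiction, but it is not what gets spliced into the final cycle. Your phrase ``generalizing the construction of $C_i$ in Lemma~\ref{lem:uis}'' is the right instinct---there the filler was the primary-cycle segment $v_kv_{k-1}\cdots v_{i+1}$, and the correct generalization is precisely a decreasing path from $v$ hitting the skipped labels, obtained directly from Lemma~\ref{lem:path}.
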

\begin{proof}
Consider a forced vertex $u$. We shall prove the statement of the lemma for $u$ by backward induction on $\alpha(u)$. The statement is true for $\alpha(u)\in\{k,k-1\}$ as there is only one forced vertex each with labels $k$ and $k-1$---which are $v$ and $v_{k-1}$ respectively (recall that from Lemma~\ref{lem:uis}, $u_{k-1}=v_{k-1}$ is the only vertex in $N(v)$ with label $k-1$). Also, note that they are both in a colourful cycle (the primary cycle) that satisfies the required conditions. Let us assume that the statement of the lemma has been proved for $\alpha(u)\in\{k,k-1,\ldots,i+1\}$. Let us look at the case when $\alpha(u)=i$. Let $z$ be the predecessor of $u$ in the path $P$ and let $P_z$ be the subpath of $P$ that starts at $v$ and ends at $z$. Let $\alpha(z)=j$. By the induction hypothesis, $\beta(z)=b_j$ and $z$ is in a colourful cycle $C$ that contains only primary colours. By Corollary~\ref{cor:colourfulcycle}, we can infer that $\beta(u)$ is a primary colour. Since $P$ was a decreasing path, $\beta(u)\in\{b_1,b_2,\ldots,b_{j-1}\}$. If $\beta(u)=b_l$ with $b_j>b_l>b_i$, then notice that there already exists a neighbour $y$ of $z$ with $\alpha(y)=l$ and $\beta(y)<\beta(z)$, because the refined greedy algorithm set $\alpha(z)=j$. Note that $P_z\cup zy$ is a decreasing path from $v$ to $y$, which implies that $y$ is a forced vertex. Clearly, $u\neq y$ as $\alpha(u)\neq\alpha(y)$. Because of our induction hypothesis, $\beta(y)=b_l$ and there is a colourful cycle containing the path $P_z\cup zy$ as a subpath. As $\beta(u)=\beta(y)$, $u$ is outside this cycle but is a neighbour of $z$. This contradicts Lemma~\ref{lem:colourfulcycle}. Therefore, $\beta(u)\leq b_i$. Consider the decreasing path $y_iy_{i-1}\ldots y_1$ where $y_i=u$, and for $s\in\{1,2,\ldots,i-1\}$, $\alpha(y_s)=s$ and $\beta(y_s)<\beta(y_{s+1})$ which exists by Lemma~\ref{lem:path}. Again by Lemma~\ref{lem:path}, there exists a decreasing path $Q$ starting from $v$ whose vertices other than $v$ have exactly the labels in $\{i+1,i+2,\ldots,k\}$ that are not seen on $P_z$. By the induction hypothesis, we can now see that every colour in $\{b_{i+1},b_{i+2},\ldots,b_k\}$ occurs exactly once in the path $Q\cup P_z$. Since $y_iy_{i-1}\ldots y_1$ is a decreasing path in which every vertex has colour at most $b_i$, we can conclude that the path $P'=Q\cup P_z \cup zy_iy_{i-1}\ldots y_1$ is a colourful path. By Observation~\ref{obs:colpath}, the graph induced by $V(P')$ is a colourful cycle containing $v$, which we shall call $C'$. By Corollary~\ref{cor:cyclewithv}, we know that $C'$ contains only primary colours. Now, if $\beta(u)<b_i$, then because $uy_{i-1}\ldots y_1$ was a decreasing path, it should mean that $\beta(y_1)<b_1$, which is a contradiction. Thus, $\beta(u)=b_i$ and $C'$ is a cycle containing $P$ as a subpath and which contains only forced vertices and primary colours (note that each $y_s$, for $1\leq s\leq k-1$, is a forced vertex as there is the decreasing path $P_z\cup zy_iy_{i-1}\ldots y_s$ from $v$ to $y_s$).
\end{proof}

\begin{corollary}\label{cor:forced}
Let $u$ be a forced vertex with $\alpha(u)=i$. Then, for each $j\in\{1,2,\ldots,i-1\}$ there is exactly one forced vertex $u_j$ in $N(u)$ with label $j$. Moreover, for each $j\in\{1,2,\ldots,i-1\}$, there is a colourful cycle containing the edge $uu_j$, sees exactly the primary colours and contains only forced vertices.
\end{corollary}
\begin{proof}
As $u$ is a forced vertex, we have from Lemma~\ref{lem:primarycolours} that $\beta(u)=b_i$. We further know that there exists a decreasing path $P$ from $v$ to $u$. Lemma~\ref{lem:path} can be used to infer that there exists a vertex $u_j\in N(u)$ such that $\alpha(u_j)=j$ and $\beta(u_j)<\beta(u)$. As $P\cup uu_j$ is a decreasing path, $u_j$ is a forced vertex. Suppose for the sake of contradiction that there exists $u'_j\neq u_j$ such that $\alpha(u'_j)=j$ and $u'_j$ is a forced vertex in $N(u)$. By Lemma~\ref{lem:primarycolours}, we have that $\beta(u_j)=\beta(u'_j)=b_j$. Applying Lemma~\ref{lem:primarycolours} on $u_j$ and the decreasing path $P\cup uu_j$, we know that there exists a colourful cycle containing the edge $uu_j$, sees exactly the primary colours and contains only forced vertices. As $\beta(u_j)=\beta(u'_j)$, the vertex $u'_j$ is not on this cycle. This contradicts Lemma~\ref{lem:colourfulcycle}.
\end{proof}

It might be helpful to note that combining Corollaries~\ref{cor:5or6} and~\ref{cor:forced}, we get that the forced vertices in $G$ are as shown in Figure~\ref{fig:5} or Figure~\ref{fig:6}. But we do not use this observation for the proof.

\begin{figure}
	\renewcommand{\defradius}{0.2}
	\renewcommand{\vertexset}{(v5,4.5,4),(v4,5.5,3),(v3,7,1.5),(v2,7.75,0.75),(v1,8.5,0),(v35,4.0,3),(v25,3,3),(v14,5.0,2),(v24,5.5,1.5),(v13,3,0),(v12,4,1.5),(v23,4.5,2)}
	\renewcommand{\edgeset}{(v5,v4),(v4,v3),(v3,v2),(v2,v1),(v5,v1,,,2),(v13,v3),(v12,v24),(v14,v4),(v24,v4),(v25,v5),(v13,v25),(v23,v35),(v35,v5),(v14,v23),(v12,v35)}
	\begin{center}
		\begin{tikzpicture}
		\drawgraph
		\node at (\xy{v5}) {$v_5$};
		\node at (\xy{v4}) {$v_4$};
		\node at (\xy{v3}) {$v_3$};
		\node at (\xy{v2}) {$v_2$};
		\node at (\xy{v1}) {$v_1$};
		\node at (\xy{v13}) {$1$};
		\node at (\xy{v12}) {$1$};
		\node at (\xy{v14}) {$1$};
		\node at (\xy{v24}) {$2$};
		\node at (\xy{v23}) {$2$};
		\node at (\xy{v25}) {$2$};
		\node at (\xy{v35}) {$3$};
		\end{tikzpicture}
	\end{center}
	\caption{The forced vertices when $k=5$. The vertices in the primary cycle are named, while only the labels of the other forced vertices are shown.}\label{fig:5}
\end{figure}

\begin{figure}
	\renewcommand{\defradius}{0.2}
	\renewcommand{\vertexset}{(v6,1.5,7),(v5,3.5,5),(v4,5.5,3),(v3,6.9,1.6),(v2,7.7,0.8),(v1,8.5,0),(v64,1.5,6),(v63,-0.5,5),(v62,-1.5,5),(v653,4.45,3.3),(v652,3.75,3.75),(v651,3.25,4.25),(v643,2.5,5),(v642,1.5,5),(v641,0.5,5),(v632,-0.1,2.5),(v631,-0.5,1),(v621,-1.5,0),(v6541,-0.1,1.75),(v6542,5.5,2.25),(v6531,1.5,3.3),(v6532,0.5,2.5),(v6521,2.25,3.75),(v6432,2.75,4.25)}
	\renewcommand{\edgeset}{(v6,v5),(v5,v4),(v4,v3),(v3,v2),(v2,v1),(v6,v1,,,2),(v6,v64),(v64,v643),(v643,v6432),(v6432,v651),(v6,v64),(v64,v643),(v643,v6521),(v6521,v652),(v652,v5),(v5,v5),(v3,v621),(v621,v62),(v62,v6),(v4,v6542),(v6542,v631),(v63,v6),(v63,v632),(v4,v6541),(v6541,v632),(v632,v63),(v5,v653),(v653,v6532),(v6532,v641),(v641,v64),(v653,v6531),(v6531,v642),(v642,v64),(v5,v652),(v652,v6521),(v6521,v643),(v643,v64),(v5,v651),(v651,v6432),(v6432,v643),(v631,v63)}
	\begin{center}
		\begin{tikzpicture}
		\drawgraph
		\node at (\xy{v6}) {$v_6$};
		\node at (\xy{v5}) {$v_5$};
		\node at (\xy{v4}) {$v_4$};
		\node at (\xy{v3}) {$v_3$};
		\node at (\xy{v2}) {$v_2$};
		\node at (\xy{v1}) {$v_1$};
		\node at (\xy{v64}) {$4$};
		\node at (\xy{v63}) {$3$};
		\node at (\xy{v62}) {$2$};
		\node at (\xy{v653}) {$3$};
		\node at (\xy{v652}) {$2$};
		\node at (\xy{v651}) {$1$};
		\node at (\xy{v653}) {$3$};
		\node at (\xy{v643}) {$3$};
		\node at (\xy{v642}) {$2$};
		\node at (\xy{v641}) {$1$};
		\node at (\xy{v642}) {$2$};
		\node at (\xy{v632}) {$2$};
		\node at (\xy{v631}) {$1$};
		\node at (\xy{v621}) {$1$};
		\node at (\xy{v6541}) {$1$};
		\node at (\xy{v6542}) {$2$};
		\node at (\xy{v6531}) {$1$};
		\node at (\xy{v6532}) {$2$};
		\node at (\xy{v6521}) {$1$};
		\node at (\xy{v6432}) {$2$};
		
		\end{tikzpicture}
	\end{center}
	\caption{The forced vertices when $k=6$. The vertices in the primary cycle are named, while only the labels of the other forced vertices are shown.}\label{fig:6}
\end{figure}

Clearly, the only forced vertex with label $k$ is $v_k$. And by Lemma~\ref{lem:uis}, we also have that the only forced vertex with label $k-1$ is $v_{k-1}$. This gives us the following observation.

\begin{observation}\label{obs:final}
There are exactly $k-2$ forced vertices in $N(v_{k-2})$.
\end{observation}
\begin{proof}
From Corollary~\ref{cor:forced}, we know that for each $i\in\{1,2,\ldots,k-3\}$, there is exactly one forced vertex with label $i$ in $N(v_{k-2})$. The only forced vertices with labels higher than $k-2$ are $v_{k-1}$ and $v_k$. Since $v_{k-2}v_k\notin E(G)$ and $v_{k-2}v_{k-1}\in E(G)$, there are exactly $k-2$ forced vertices in $N(v_{k-2})$.
\end{proof}

\begin{lemma}\label{lem:final}
Every vertex in $N(v_{k-2})$ is a forced vertex.
\end{lemma}
\begin{proof}
Suppose for the sake of contradiction that there exists a vertex $u\in N(v_{k-2})$ that is not a forced vertex. Applying Lemma~\ref{lem:colourfulcycle} on the primary cycle and $u$, we get that $\beta(u)$ is a primary colour other than $b_{k-1},b_{k-2},b_{k-3}$. By applying Corollary~\ref{cor:forced} to $v_{k-2}$, we know that for each $j\in\{1,2,\ldots,k-3\}$, there is a forced vertex $u_j\in N(v_{k-2})$ such that $\alpha(u_j)=j$ and there is a colourful cycle $C_j$ containing the edge $v_{k-2}u_j$, sees exactly the primary colours and contains only forced vertices. Note that by Lemma~\ref{lem:primarycolours}, we have $\beta(u_j)=b_j$. Then by applying Lemma~\ref{lem:colourfulcycle} to $C_j$ and $u$, for each $j\in\{1,2,\ldots,k-3\}$, we can conclude that $\beta(u)\notin\{b_1,b_2,\ldots,b_{k-3}\}$. Further, by applying Lemma~\ref{lem:colourfulcycle} to the primary cycle and $u$, we get that $\beta(u)\neq b_{k-1}$. Therefore, $\beta(u)=b_k$. By Lemma~\ref{lem:path} applied on the vertex $v_{k-1}$ and set $\{b_{k-3},b_{k-4},\ldots,b_1\}$, there exists a decreasing path $v_{k-1}v'_{k-3}v'_{k-4}\ldots v'_1$ where $\alpha(v'_i)=i$. Each $v'_i$, for $1\leq i\leq k-3$, is a forced vertex since $v_kv_{k-1}v'_{k-3}v'_{k-4}\ldots v'_i$ is a decreasing path from $v_k$ to $v'_i$. From Lemma~\ref{lem:primarycolours}, we now have that for each $1\leq i\leq k-3$, $\beta(v'_i)=b_i$. Again applying Lemma~\ref{lem:path} to $v_{k-1}$ and the set $\{k-4,k-5,\ldots,1\}$, we get a decreasing path $v_{k-1}u'_{k-4}u'_{k-5}\ldots u'_1$,  for which using similar arguments as before, we can see that $u'_i$, for $1\leq i\leq k-4$, is a forced vertex with $\beta(u'_i)=b_i$. Now applying Lemma~\ref{lem:path} to $v_k$ and the set $\{k-2,k-3\}$, we get a decreasing path $v_kx_{k-2}x_{k-3}$, and using similar arguments as before we get that $x_i$, for $i\in\{k-2,k-3\}$, is a forced vertex with $\beta(x_i)=b_i$. Note that $uv_{k-2}v_{k-1}v'_{k-3}v'_{k-4}\ldots v'_1$ is a colourful path, implying that $uv'_1\in E(G)$. Also, $u'_1u'_2\ldots u'_{k-4}v_{k-1}v_kx_{k-2}x_{k-3}$ is a colourful path, and hence $u'_1x_{k-3}\in E(G)$. Now, $uv_{k-2}v_{k-1}u'_{k-4}u'_{k-5}\ldots u'_1x_{k-3}$ is a colourful path, which implies that $ux_{k-3}\in E(G)$. Further, $v'_1v'_2\ldots v'_{k-3}v_{k-1}v_kx_{k-2}$ is a colourful path, which gives us $v'_1 x_{k-2}\in E(G)$. Therefore, we have the 4-cycle $uv'_1x_{k-2}x_{k-3}u$ in $G$, which is a contradiction.
\end{proof}

From Observation~\ref{obs:final} and Lemma~\ref{lem:final}, we have that $v_{k-2}$ has exactly $k-2$ neighbours, which is a contradiction to Observation~\ref{obs:mindegree}. This completes the proof of Theorem~\ref{thm:main}.

\section{Conclusion}
The results of this paper imply that for any properly coloured graph $G$ with $g(G)\geq\chi(G)>3$, there exists an induced colourful path on $\chi(G)$ vertices in $G$. The question of whether every properly coloured graph $G$ contains an induced colourful path on $\chi(G)$ vertices remains open for the case $3<g(G)<\chi(G)$.

\section*{Acknowledgements}
The authors would like to thank an anonymous referee whose suggestions led to considerable simplification of the proof. A part of this work was done when Manu Basavaraju was a postdoc at the Max-Planck-Institut f\"ur Informatik, Saarbr\"ucken. L.~Sunil Chandran was supported by a fellowship from the Humboldt Research Foundation. Mathew~C. Francis would like to acknowledge the support provided by the DST-INSPIRE Faculty Award IFA12-ENG-21.
\bibliography{references}
\end{document}